\subjclass[2010]{46L30, 46L40}
\numberwithin{equation}{section}
\newtheorem{theorem}{\bf Theorem}[section]
\newtheorem{lemma}[theorem]{\bf Lemma}
\newtheorem{prop}[theorem]{\bf Proposition}
\newtheorem{defn}[theorem]{\bf Definition}
\newtheorem{example}[theorem]{\bf Example}
\newtheorem{corollary}[theorem]{\bf Corollary}
\makeatletter \@namedef{subjclassname@2020}{\textup{2020}
	Mathematics Subject Classification} \makeatother
\begin{document}

	\title[A  CP completion theorem]{A minimal completion theorem  and almost everywhere equivalence for completely positive maps}
	
	\author{B. V. Rajarama Bhat}
	\address{Indian Statistical Institute, Stat Math Unit, R V College Post, Bengaluru, 560059, India.}
	\email{bvrajaramabhat@gmail.com, bhat@isibang.ac.in}
	
	\author{Arghya Chongdar}
	\address{Indian Statistical Institute, Stat Math Unit, R V College Post, Bengaluru, 560059, India.}
	\email{chongdararghya@gmail.com}

	\keywords{Completely positive maps, almost everywhere equivalence,
		matrix completion, quasi-pure}

	\subjclass[2020]{47A20, 46L53,  81P16, 81P47}
	
	\maketitle

	\begin{abstract}
		A  problem of completing a linear map on $C^*$-algebras to a
		completely positive map is analyzed. It is shown that whenever such
		a completion is feasible there exists  a unique minimal completion.
		This theorem is used to show that under some very  general
		conditions a completely positive map almost everywhere equivalent to
		a quasi-pure map is actually equal to that map.
		
	\end{abstract} \maketitle
	
	\section{Introduction}

	\maketitle{}

	Motivated by the notion of almost everywhere equality for measurable
	functions Parzygnat and Russo  \cite{PR} came up with a definition
	of almost everywhere equivalence (with respect to a state) for
	completely positive maps. The definition requires the familiar
	concept of   null ideal  and support projection for states and
	completely positive maps and we recall it here.
	
	Let $\mathcal{B}, \mathcal{C}$ be unital $C^*$-algebras and let
	$\xi :\mathcal{B}\to \mathcal {C}$ be a completely positive (CP)
	map. Then
	$$\mathcal{N}_{\xi}:=\{ X \in \mathcal{B}: \xi (X^*X)=0\}$$
	is a left ideal of $\mathcal{B}$. It is called the  {\em null ideal}
	of $\xi .$ If $\mathcal{B}, \mathcal{C}$ are von Neumann algebras
	and $\xi $ is normal then
	\begin{equation}\label{nullspace}\mathcal{N}_{\xi }=\mathcal{B}(I-P):= \{X (I-P) : X\in
		\mathcal{B}\}\end{equation}
	for a unique projection $P$ in
	$\mathcal{B}.$ The projection $P$ is is known as the {\em support
		projection} of $\xi $  (See \cite{Sakai}) and is the smallest
	projection $P$ such that $\xi (P)=\xi (I)$ and satisfies $\xi
	(X)=\xi(XP), \forall X\in \mathcal{B}.$ Now the definition by
	Parzygnat and Russo \cite{PR}  reads as follows.
	
	\begin{defn}\label{def:equal_ae}
		Let $\mathcal{A}, \mathcal{B}, \mathcal{C} $ be  unital
		$C^*$-algebras and let $\xi :\mathcal{B}\to \mathcal{C} $ be a
		unital completely positive map.  Then two linear maps $\varphi ,
		\psi $ from $\mathcal {A}$ to $\mathcal{B}$ are said to be {\em
			equal almost everywhere (a.e.) } with respect to $\xi $ if $\varphi
		(X)-\psi (X)\in \mathcal{N} _{\xi}$ for every $X$ in $\mathcal{A}.$
		This is denoted by $\varphi \underset{\xi}{=} \psi .$
	\end{defn}

	These authors were mainly interested in the case where $\xi $ is a
	state. One of the surprising results of theirs is that on the
	$C^*$-algebra $M_n(\mathbb{C})$ of $n\times n$ complex matrices,
	given any state $\xi $, if a unital completely positive map $\varphi
	$ is equal almost everywhere to the identity map with respect to
	$\xi $, then $\varphi $ must be the identity map (see Theorem 2.48
	of \cite{PR}). The result was obtained through detailed computations
	involving Choi-Kraus coefficients of the given completely positive
	map.  Here we try to have a more conceptual understanding of their
	result by putting the problem in a more abstract setting.

	There is extensive literature on completing partially specified
	matrices to positive matrices (see %\cite{Entner},
	\cite{Grone},
	\cite{Smith} and Chapter 12 of \cite{Bapat}). It appears in a
	variety of different contexts and has a number of real life
	applications.  Furuta \cite{Furuta} has investigated a completion
	problem of partial matrices whose entries are completely bounded
	maps on a C*-algebra. We have a somewhat different setting. This we
	call as the CP completion problem. The minimal completion theorem in
	Section 3 (See Theorem \ref{Minimal completion theorem}) shows that
	if a linear map admits CP completion then there is a unique minimal
	CP completion. A direct application of this yields a result much
	more general than Theorem 2.48 of \cite{PR}. It is valid for a large
	class of maps called quasi-pure maps between arbitrary
	$C^*$-algebras (See Theorem \ref{main} and Corollary \ref{outcome}).
	Moreover the proof becomes simpler and more algebraic. The notion of
	quasi-pure maps is introduced and studied in Section 2. The
	usefulness of it will be clear in subsequent sections.

	Generalizing the concept of  positive completability from  matrices
	to maps on $C^*$-algebras we have the following definition.
	
	\begin{defn}
		Let $\mathcal{A}, \mathcal{B}$ be  $C^*$-algebras with
		$\mathcal{B}\subseteq \mathscr{B}(\mathcal{H})$ for some Hilbert
		space. Fix $R\in \mathscr{B}(\mathcal{H}).$ Then a linear map $\beta
		:\mathcal{A}\to \mathcal{B}.R:= \{YR: Y\in \mathcal{B}\}$, is said
		to be CP completable (with respect to $R$) if there exists a CP map
		$\varphi :\mathcal{A}\to \mathcal{B}$ such that
		$$\beta(X)= \varphi(X)R, ~~\forall X\in \mathcal{A}.$$
		In such a case, $\varphi $ is called a CP completion of $\beta .$
	\end{defn}
	We are mostly interested in the case when $R$ is a projection. It is
	a natural problem to characterize linear maps which can be CP
	completed.  We give some necessary conditions for CP completability
	in Theorem \ref{completion existence theorem}. More interestingly,
	in the main theorem (Theorem \ref{Minimal completion theorem}) it is
	shown that every CP completable linear map admits a unique `minimal'
	CP completion. The notion of minimality will be made clear below.
	Arguably the setting of this Definition is somewhat peculiar. It is
	motivated by the theory we are able to develop in Section 3 and is
	justified by the applications it has in Section 4.

	We recall a couple of  basic results from the theory of completely
	positive maps. They are well-known and standard.  We state them here
	mainly to introduce the relevant notation. The following theorem is
	due to Stinespring (see Theorem 1, \cite{Stinespring}).
	
	\begin{theorem} (Stinspring's theorem)
		Let $\mathcal{A}$ be a unital $C^*$-algebra with a unit,  let $\mathcal{H}$ be a
		Hilbert space and let $\varphi :\mathcal{A}\to\mathcal{B}(\mathcal{H})$ be a linear map. Then $\varphi $ is completely positive
		if and only if there exists a triple ($\mathcal{K},\pi,V$) where $\mathcal{K}$ is a Hilbert space,
		$\pi:\mathcal{A}\to\mathcal{B}(\mathcal{K})$  is a unital
		$*$-representation  and $V:\mathcal{H}\to\mathcal{K}$ is a bounded
		linear map such that \begin{equation}\label{Stinespring  triple}
			\varphi (X)=V^*\pi(X)V,~\forall X\in \mathcal{A}.   \end{equation}
		Further, $V$ is an isometry if and only if $\varphi $ is unital.
	\end{theorem}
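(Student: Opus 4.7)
The plan is to prove the two directions separately, with the non-trivial content lying entirely in the ``only if'' direction, which is handled by a GNS-type construction.

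For the easy direction, assume $\varphi(X) = V^*\pi(X)V$. Any unital $*$-representation $\pi$ is completely positive, since the amplification $\pi \otimes \mathrm{id}_{M_n}$ is again a $*$-homomorphism and $*$-homomorphisms send positive elements to positive elements. Complete positivity is then preserved under compression by $V$: for any positive $[X_{ij}] \in M_n(\mathcal{A})$, writing $\widetilde V = V \oplus \cdots \oplus V$, the matrix $[V^*\pi(X_{ij})V] = \widetilde V^*[\pi(X_{ij})]\widetilde V$ is positive as a conjugation of a positive element. This gives complete positivity of $\varphi$ immediately.

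For the hard direction, given a CP map $\varphi$, I would build $(\mathcal{K}, \pi, V)$ as follows. Form the algebraic tensor product $\mathcal{A} \otimes \mathcal{H}$ and define a sesquilinear form by
\begin{equation*}
\Bigl\langle \sum_i X_i \otimes h_i,\ \sum_j Y_j \otimes k_j \Bigr\rangle := \sum_{i,j} \langle \varphi(Y_j^* X_i) h_i, k_j \rangle.
\end{equation*}
The key step is to show this form is positive semi-definite: evaluated on $\sum_i X_i \otimes h_i$, it equals $\langle [\varphi(X_j^* X_i)] \widetilde h, \widetilde h \rangle$ on $\mathcal{H}^n$, where $\widetilde h = (h_1,\ldots,h_n)$, and positivity follows because $[X_j^*X_i] = [X_i]^*[X_i] \geq 0$ in $M_n(\mathcal{A})$ and $\varphi$ is $n$-positive. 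Quotienting by the null space $\mathcal{N} = \{\xi : \langle \xi,\xi\rangle = 0\}$ and completing produces a Hilbert space $\mathcal{K}$.

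Next, define $\pi(X)$ on $\mathcal{A} \otimes \mathcal{H}$ by $\pi(X)(Y \otimes h) = XY \otimes h$. To show $\pi(X)$ descends to $\mathcal{K}$ and is bounded, I would use the estimate $\varphi(Y^* X^*X Y) \leq \|X\|^2 \varphi(Y^*Y)$ (more generally, at the matrix level), which follows from the operator inequality $[Y_j^*X^*XY_i] \leq \|X\|^2 [Y_j^* Y_i]$ in $M_n(\mathcal{A})$ combined with $n$-positivity of $\varphi$; this simultaneously shows $\pi(X)$ kills $\mathcal{N}$ and is bounded by $\|X\|$ on the quotient, so it extends to $\mathcal{K}$. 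Verifying that $\pi$ is a unital $*$-representation is then routine from the definition on simple tensors. Finally, define $V : \mathcal{H} \to \mathcal{K}$ by $Vh = [1 \otimes h]$; the identity $V^*\pi(X)V = \varphi(X)$ is a direct computation with the inner product, and $\|Vh\|^2 = \langle \varphi(1)h,h\rangle$ shows $V$ is an isometry precisely when $\varphi(1) = I$, i.e.\ $\varphi$ is unital.

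The main obstacle is the well-definedness and boundedness of $\pi$ on the quotient; everything else is either routine verification or the definitional positivity argument. Once the Schwarz-type inequality above is in hand, all remaining pieces (multiplicativity of $\pi$, the Stinespring identity, the isometry criterion) follow mechanically from the definitions.
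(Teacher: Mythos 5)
The paper does not prove this theorem at all --- it is quoted as a known result with a citation to Stinespring's original article --- so there is no proof of record to compare against. Your argument is the standard GNS-type construction on the algebraic tensor product $\mathcal{A}\otimes\mathcal{H}$ (positivity of the Gram-type form from $n$-positivity, the Schwarz-type estimate $[\,Y_i^*X^*XY_j\,]\leq\|X\|^2[\,Y_i^*Y_j\,]$ for well-definedness and boundedness of $\pi$, and $Vh=[1\otimes h]$), and it is correct and complete as a sketch. The only quibble is an index-bookkeeping wobble: the matrix with $(i,j)$ entry $X_j^*X_i$ is not in general positive in $M_n(\mathcal{A})$ --- the positive Gram matrix is $[\,X_i^*X_j\,]_{i,j}$ --- but the quadratic form you actually need, $\sum_{i,j}\langle\varphi(X_j^*X_i)h_i,h_j\rangle\geq 0$, is exactly the one controlled by positivity of $[\varphi(X_i^*X_j)]_{i,j}$, so the argument goes through once the indices are written consistently.
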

	
	Given a completely positive map $\varphi
	:\mathcal{A}\to\mathcal{B(\mathcal{H})},$ a triple
	($\mathcal{K},\pi,V$) satisfying \ref{Stinespring triple} is called
	a Stinespring triple. Furthermore, a Stinespring triple is called
	minimal if $$\overline{\mbox{span}}\{
	\pi(X)Vh:X\in\mathcal{A},h\in\mathcal{B}(\mathcal{H})\}=\mathcal{K}.$$

	\begin{defn}
		Let $\mathcal{A}, \mathcal{B}$ be  $C^*$-algebras. Then for any two
		CP maps $\varphi,\psi:\mathcal{A}\to\mathcal{B},$ $\psi$ is said to
		be dominated by $\varphi$ (denoted by $\psi\leq\varphi$), if
		$\varphi-\psi$ is CP.
	\end{defn}
	
	Motivated from the Radon Nikodym theorem in measure theory, Arveson proved the following
	theorem for CP maps (Theorem 1.4.2,\cite{Arveson}). Here $\pi
	(\mathcal{A})'$ denotes the commutant of $\pi(\mathcal{A}).$
	
	\begin{theorem} \label{Radon Nikodym type theorem}
		Let $\varphi:\mathcal{A}\to\mathcal{B}(\mathcal{H})$ be a CP map
		with minimal Stinespring triple ($\mathcal{K},\pi,V$). Then a CP map
		$\psi:\mathcal{A}\to\mathcal{B}(\mathcal{H})$ satisfies
		$\psi\leq\varphi$ iff there is a positive contraction $D\in
		~\pi(\mathcal{A})'$ such that $\psi(X)=V^*D\pi(X)V,~\forall X\in
		\mathcal{A}.$ Moreover, given $\psi \leq \varphi $, there exists
		unique such $D$ in $\pi(\mathcal{A})'.$

	\end{theorem}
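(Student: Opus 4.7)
The plan is to prove the two implications separately: the ``if'' direction is almost immediate, while for the ``only if'' direction $D$ is constructed from $\psi$ via a sesquilinear form on a dense subspace of $\mathcal{K}$; uniqueness is then forced by minimality of the Stinespring triple.

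For the easy direction, suppose $D \in \pi(\mathcal{A})'$ is a positive contraction. Since $D^{1/2}$ lies in the same commutant, one rewrites $\psi(X) = V^*D\pi(X)V = (D^{1/2}V)^*\pi(X)(D^{1/2}V)$, a Stinespring-type expression, hence CP. Replacing $D^{1/2}$ by $(I-D)^{1/2}$ yields the analogous expression for $\varphi - \psi$, which is therefore also CP, so $\psi \leq \varphi$.

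The substantive direction is the converse. Let $\mathcal{K}_0 := \mathrm{span}\{\pi(X)Vh : X \in \mathcal{A},\ h \in \mathcal{H}\}$, which is dense in $\mathcal{K}$ by minimality. Define a sesquilinear form on linear combinations of such vectors by
$$B\!\left(\sum_i \pi(X_i)Vh_i,\ \sum_j \pi(Y_j)Vk_j\right) := \sum_{i,j}\langle \psi(Y_j^*X_i)h_i, k_j\rangle.$$
Expanding inner products through the Stinespring identity gives, for $\xi = \sum_i \pi(X_i)Vh_i$,
$$\|\xi\|^2 = \sum_{i,j}\langle \varphi(X_j^*X_i)h_i, h_j\rangle, \qquad B(\xi,\xi) = \sum_{i,j}\langle \psi(X_j^*X_i)h_i, h_j\rangle.$$
Complete positivity of $\varphi - \psi$ makes the matrix $[(\varphi-\psi)(X_j^*X_i)]$ positive in $M_n(\mathcal{B}(\mathcal{H}))$, yielding $0 \leq B(\xi,\xi) \leq \|\xi\|^2$. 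The main obstacle is well-definedness of $B$: if a formal sum represents the zero vector in $\mathcal{K}$, the inequality forces $B(\xi,\xi)=0$, and Cauchy--Schwarz for the positive semidefinite form $B$ then gives $B(\xi,\eta)=0$ for every $\eta$. Thus $B$ descends to a positive sesquilinear form on $\mathcal{K}_0$ of norm at most one, which extends by continuity to $\mathcal{K}$ and is implemented by a positive contraction $D \in \mathcal{B}(\mathcal{K})$.

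It remains to verify that $D$ commutes with $\pi(\mathcal{A})$ and represents $\psi$. For $A \in \mathcal{A}$, a short calculation on $\mathcal{K}_0$ gives
$$\langle D\pi(A)\pi(X)Vh, \pi(Y)Vk\rangle = \langle \psi(Y^*AX)h,k\rangle = \langle \pi(A)D\pi(X)Vh, \pi(Y)Vk\rangle,$$
so $D \in \pi(\mathcal{A})'$. Taking $X = Y = I$ in the definition of $B$ yields $\langle V^*D\pi(A)Vh, k\rangle = \langle \psi(A)h,k\rangle$, i.e., $\psi(A) = V^*D\pi(A)V$. For uniqueness, any $D' \in \pi(\mathcal{A})'$ with $\psi(X) = V^*D'\pi(X)V$ satisfies $\langle D'\pi(X)Vh, \pi(Y)Vk\rangle = \langle V^*D'\pi(Y^*X)Vh, k\rangle = \langle \psi(Y^*X)h, k\rangle$, and agreement with $D$ on the total set $\mathcal{K}_0$ forces $D' = D$.
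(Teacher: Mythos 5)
Your proof is correct, and it is essentially the classical argument of Arveson (Theorem 1.4.2 of the cited paper): the paper itself states this Radon--Nikodym theorem without proof, referring to Arveson, and your construction of $D$ via the bounded positive sesquilinear form on the dense span $\mathrm{span}\{\pi(X)Vh\}$, with well-definedness forced by $0\leq B(\xi,\xi)\leq\|\xi\|^2$ and Cauchy--Schwarz, is exactly the standard route. All steps, including the easy direction via $D^{1/2},(I-D)^{1/2}\in\pi(\mathcal{A})'$ and the uniqueness from totality of $\mathcal{K}_0$, check out.
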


	Now we are in a good shape to introduce  `minimality' in the context
	of CP completions.
	
	\begin{defn}
		Let $\mathcal{A}, \mathcal{B}$ be  $C^*$-algebras with $\mathcal{B}\subseteq
		\mathscr{B}(\mathcal{H})$ for some Hilbert space ${\mathcal{H}}.$
		Fix $R\in \mathscr{B}(\mathcal{H}).$ Let  $\beta :\mathcal{A}\to
		\mathcal{B}R$ be a linear map. Then a CP completion
		$\varphi:\mathcal{A}\to \mathcal{B}$ is said to be a minimal CP
		completion of $\beta$ if $\psi:\mathcal{A}\to\mathcal{B} $ is any CP
		completion of $\beta $ then  $\varphi \leq \psi $, that is, $\varphi
		$ is dominated by $\psi.$
	\end{defn}

	\section{Quasi-pure CP maps}

	First let us recall the definition of  pure completely positive
	maps.
	\begin{defn}
		A completely positive map $\varphi:\mathcal{A}\to\mathcal{B}$ is
		called pure if $\psi:\mathcal{A}\to\mathcal{B}$ is a CP map with
		$\psi\leq\varphi$ implies $\psi=t\varphi$ for some $t \in[0,1].$
		
	\end{defn}
	
	Before stating a very useful characterization of pure CP maps, we
	recall the notions of cyclic vector of a representation and
	irreducibility of  representations.

	\begin{defn}
		
		Let $\mathcal{A}$ be a $C^*$-algebra and let $\mathcal{K}$ be a
		Hilbert space. Let $\pi:\mathcal{A}\to\mathcal{B}(\mathcal{K})$ be a
		representation of $\mathcal{A}.$ Then a vector $v$ in $\mathcal{K}$
		is called a cyclic vector of the representation $\pi$ if
		$\overline{\mbox{span}}\{\pi(X)v: X\in \mathcal{A}\}=\mathcal{K}.$
	\end{defn}

	\begin{defn}
		
		A representation $\pi:\mathcal{A}\to\mathcal{B}(\mathcal{K})$ on
		$\mathcal{A}$ is called irreducible if every non-zero vector in
		$\mathcal{A}$ is a cyclic vector of $\pi.$
	\end{defn}
	The following result (see Corollary 1.4.3,\cite{Arveson})
	characterizes pure CP maps in terms of irreducibility of their
	Stinespring representation. It can be proved easily using Theorem
	\ref{Radon Nikodym type theorem}.
	
	\begin{prop}
		Let $\varphi:\mathcal{A}\to\mathcal{B}(\mathcal{H})$ be a CP map
		with minimal Stinespring triple ($\mathcal{K},\pi,V$). Then
		$\varphi$ is pure if and only if $\pi$ is an irreducible
		representation.
	\end{prop}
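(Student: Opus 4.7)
The proof is an almost immediate consequence of the Radon--Nikodym type theorem (Theorem \ref{Radon Nikodym type theorem}), which sets up a bijective correspondence
\[
\{\psi : \mathcal{A}\to\mathcal{B}(\mathcal{H}) \text{ CP},\ \psi\leq\varphi\}\longleftrightarrow \{D\in\pi(\mathcal{A})' : 0\leq D\leq I\},\qquad \psi(X)=V^*D\pi(X)V.
\]
Uniqueness of $D$ is crucial. Note that under this bijection the scalar multiple $\psi = t\varphi$ with $t\in[0,1]$ corresponds to $D=tI$ (clearly $tI\in\pi(\mathcal{A})'$, and $V^*(tI)\pi(X)V=t\varphi(X)$).

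For the ``if'' direction, assume $\pi$ is irreducible. Then $\pi(\mathcal{A})'=\mathbb{C}I$, so every positive contraction in $\pi(\mathcal{A})'$ has the form $tI$ with $t\in[0,1]$. Hence every CP map $\psi\leq\varphi$ is of the form $V^*(tI)\pi(X)V=t\varphi(X)$, proving purity of $\varphi$.

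For the ``only if'' direction, suppose $\pi$ is not irreducible. Then $\pi(\mathcal{A})'$ strictly contains $\mathbb{C}I$, and since $\pi(\mathcal{A})'$ is a von Neumann algebra it is generated by its projections; thus there exists a projection $P\in\pi(\mathcal{A})'$ with $P\neq 0,I$. Define $\psi(X):=V^*P\pi(X)V$. Since $P$ is a positive contraction in $\pi(\mathcal{A})'$, the CP map $\psi$ satisfies $\psi\leq\varphi$. If $\varphi$ were pure, then $\psi=t\varphi$ for some $t\in[0,1]$, and the uniqueness clause of Theorem \ref{Radon Nikodym type theorem} would force $P=tI$, contradicting the fact that $P$ is a non-trivial projection. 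Hence $\varphi$ is not pure.

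The main (minor) obstacle is simply the bookkeeping of the bijective correspondence and its uniqueness, and the standard fact that a non-irreducible $*$-representation admits a non-trivial invariant projection in the commutant; no new idea beyond Theorem \ref{Radon Nikodym type theorem} is needed.
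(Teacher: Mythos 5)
Your proof is correct, and it is exactly the route the paper intends: the paper omits the argument, citing Arveson and remarking only that the proposition ``can be proved easily using Theorem \ref{Radon Nikodym type theorem}'', which is precisely the Radon--Nikodym correspondence you carry out (irreducibility $\Leftrightarrow$ trivial commutant, plus the uniqueness of $D$ to rule out purity when a non-trivial projection exists in $\pi(\mathcal{A})'$). No issues to report.
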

	
	In other words, a CP map $\varphi $ is pure if and only if every
	non-zero vector in the dilation space $\mathcal{K}$ is cyclic for
	the Stinespring representation $\pi .$  Making this condition
	significantly weaker we have the following definition.
	\begin{defn}
		Let $\mathcal{A}$ be a $C^*$-algebra and let $\mathcal{H}$ be a
		Hilbert space.  Let $\varphi :\mathcal{A}\to
		\mathscr{B}(\mathcal{H})$ be a CP map with minimal Stinespring
		triple ($\mathcal{K},\pi,V$). Then $\varphi $ is said to be {\em
			quasi-pure} if every non-zero vector in the range of $V$ is cyclic
		for the representation $\pi $, that is,
		$$\mathcal{K}= \overline{\mbox{span}}\{\pi(X)g: X\in
		\mathcal{A}\},$$
		for every $0\neq g \in V(\mathcal{H}).$
	\end{defn}

	MD Choi in his seminal paper \cite{Choi} showed that any non-zero CP
	map, say, $\varphi : M_{d_1}(\mathbb{C})\to M_{d_2}(\mathbb{C})$ is
	of the form, $\varphi(X) =\sum_{j=1}^{k}L_j^*XL_j, ~~X\in
	M_{d_1}(\mathbb{C})$ such that $k\in \mathbb{N}$ and the set  of
	$d_1\times d_2$ matrices $\{L_1,L_2, \ldots , L_k\}$ forms a
	linearly independent set.  This can be put in Stinespring's
	representation form: $\varphi (X) = V^*\pi(X)V,$ where
	$$\pi (X) = \left[ \begin{array}{cccc}
		X& 0& \ldots & 0 \\
		0&X& \ldots &0\\
		\vdots & \vdots & \ddots & \vdots \\
		0&0& \ldots &X\end{array}\right] _{k\times k},~~V= \left[
	\begin{array}{c}
		L_1\\
		L_2\\
		\vdots \\
		L_k\end{array}\right]_{k\times 1}.$$ In particular, if $\varphi $ is
	pure,  as the representation $\pi $ has to be irreducible, we must
	have $k=1$. Consequently  non-zero pure CP maps have the form
	$$X\mapsto  L^*XL,$$
	for some $d_1\times d_2$ non-zero matrix $L$ and conversely any such
	CP map is pure. Now it is easy to provide examples of  quasi-pure
	maps which are not pure.

	\begin{example}\label{first example}
		Let $d_1, d_2\in \mathbb{N}.$ Let $\rho $ be a positive matrix in
		$M_{d_1}(\mathbb{C})$ and let $v$ be a unit vector in
		$\mathbb{C}^{d_2}.$ Define $\varphi : M_{d_1}(\mathbb{C})\to
		M_{d_2}(\mathbb{C})$  by
		\begin{equation}
			\label{example equation}
			\varphi(X)= \mbox{trace}~(\rho X)|v\rangle \langle v|, ~~X\in
			M_{d_1}(\mathbb{C}),
		\end{equation}
		where $|v\rangle \langle v|$ is the
		projection onto the one dimensional subspace spanned by $v$. Clearly
		$\varphi $ is completely positive.  By spectral theorem, there exist
		orthonormal vectors $u_1, u_2, \ldots , u_k,$ and positive scalars
		$p_1, p_2, \ldots , p_k,$ such that
		$$ \rho = \sum _{j=1}^k p_j|u_j\rangle \langle u_j|,$$
		where $k$ is the rank of $\rho .$ Then $\varphi $ has the minimal
		Stinespring representation, $(\mathcal{K}, \pi , V)$ where
		$\mathcal{K}= \mathbb{C}^{d_1}\otimes \mathbb{C}^k$, $\pi (X)=
		X\otimes I_{\mathbb{C}^k}$ and $V: \mathbb{C}^{d_2}\to \mathcal{K},$
		is defined by $Vh= \sum _{j=1}^k \langle v, h\rangle (\sqrt{p_j}u_j)\otimes e_j,$
		where $e_1, e_2, \ldots , e_k$ is an orthonormal basis of
		$\mathbb{C}^k.$ Now it is not hard to see that $\varphi $ is
		quasi-pure. It is not pure if $k>1.$
	\end{example}

	Here is a characterization of quasi-pure CP maps without referring
	to their Stinespring dilations and just using kernels of some maps.
	
	\begin{theorem}\label{kernel}
		Let $\mathcal{A}$ be a $C^*$-algebra and let $\mathcal{H}$ be a
		Hilbert space.  Let $\varphi :\mathcal{A}\to
		\mathscr{B}(\mathcal{H})$ be a CP map. Then $\varphi $ is quasi-pure
		if and only if whenever a non-zero CP map $\alpha$ is dominated by
		$\varphi$,
		$\mbox{ker}(\alpha (1))= \mbox{ker} (\varphi (1)).$
	\end{theorem}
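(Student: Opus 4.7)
My plan is to reduce everything to the parametrization of $\alpha \le \varphi$ given by Arveson's Radon--Nikodym theorem (Theorem~\ref{Radon Nikodym type theorem}). Fix a minimal Stinespring triple $(\mathcal{K}, \pi, V)$ for $\varphi$. Then for every CP map $\alpha \le \varphi$ there is a unique positive contraction $D \in \pi(\mathcal{A})'$ with $\alpha(X) = V^* D\, \pi(X) V$. Taking $X = 1$ and using that $\pi$ is unital, I get $\varphi(1) = V^*V$ and $\alpha(1) = V^*DV = (D^{1/2}V)^*(D^{1/2}V)$, so
$$ \ker \varphi(1) = \ker V, \qquad \ker \alpha(1) = \ker(D^{1/2}V). $$
The condition in the theorem thus becomes a statement about when $\ker(D^{1/2}V) = \ker V$.

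For the forward direction, assume $\varphi$ is quasi-pure and let $\alpha \le \varphi$ be non-zero, say $\alpha(\cdot) = V^*D\pi(\cdot)V$ with $D \ne 0$. The inclusion $\ker V \subseteq \ker(D^{1/2}V)$ is clear. For the reverse inclusion, suppose $h \in \ker(D^{1/2}V)$ but $Vh \ne 0$. Since $D^{1/2} \in \pi(\mathcal{A})'$,
$$ D^{1/2}\pi(X)Vh = \pi(X) D^{1/2} Vh = 0 \quad \text{for every } X\in\mathcal{A}. $$
Quasi-pureness says $\pi(\mathcal{A})Vh$ is dense in $\mathcal{K}$, so $D^{1/2} = 0$ on a dense set, forcing $D = 0$ and contradicting $\alpha \ne 0$. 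Hence $Vh = 0$, and so $\ker \alpha(1) = \ker\varphi(1)$.

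For the converse I argue by contrapositive. Suppose $\varphi$ is not quasi-pure: there exists $0 \ne g = Vh_0 \in V(\mathcal{H})$ with $\mathcal{K}_0 := \overline{\mathrm{span}}\{\pi(X)g : X \in \mathcal{A}\}$ a \emph{proper} closed subspace of $\mathcal{K}$. Since $\mathcal{K}_0$ is $\pi(\mathcal{A})$-invariant and $\pi$ is a $*$-representation, so is $\mathcal{K}_0^\perp$; hence the projection $P$ onto $\mathcal{K}_0$ lies in $\pi(\mathcal{A})'$. Take $D = I - P$, a positive contraction in $\pi(\mathcal{A})'$, and put $\alpha(X) = V^*(I-P)\pi(X)V$, which is CP with $\alpha \le \varphi$. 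The key subtlety---and the only real obstacle---is to verify $\alpha \ne 0$: if $\alpha(1) = V^*(I-P)V = 0$, then $(I-P)V = 0$, giving $V(\mathcal{H}) \subseteq \mathcal{K}_0$; by invariance of $\mathcal{K}_0$ and minimality of the Stinespring triple I would get $\mathcal{K} = \overline{\mathrm{span}}\,\pi(\mathcal{A})V(\mathcal{H}) \subseteq \mathcal{K}_0$, contradicting that $\mathcal{K}_0$ is proper.

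Finally, $h_0 \in \ker \alpha(1)$ because $Vh_0 = g \in \mathcal{K}_0$ means $(I-P)Vh_0 = 0$, while $h_0 \notin \ker V$ as $Vh_0 = g \ne 0$. Thus $\ker \alpha(1) \ne \ker \varphi(1)$ for this non-zero $\alpha \le \varphi$, contradicting the hypothesis and completing the proof.
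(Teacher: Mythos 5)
Your proof is correct and follows essentially the same route as the paper: Arveson's Radon--Nikodym parametrization $\alpha(\cdot)=V^*D\pi(\cdot)V$ with $D\in\pi(\mathcal{A})'$ for the forward direction, and the cutdown $\alpha(X)=V^*(I-P)\pi(X)V$ by the projection onto $\overline{\mathrm{span}}\,\pi(\mathcal{A})Vh_0$ for the converse. The one point where you go beyond the paper is your explicit verification that this $\alpha$ is non-zero (via minimality of the Stinespring triple), a detail the paper's converse argument leaves implicit but which is genuinely needed since the hypothesis only quantifies over non-zero dominated maps.
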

	\begin{proof}
		
		Let $(\mathcal{K}, \pi , V)$ be a  minimal Stinespring
		triple of $\varphi .$
		
		First assume that $\varphi :\mathcal{A}\to \mathscr{B}(\mathcal{H})$ is a quasi-pure CP map. Let $\alpha :\mathcal{A}\to
		\mathscr{B}(\mathcal{H})$ be a non-zero CP map such that
		$\alpha\leq\varphi.$   As $\alpha (1)\leq \varphi (1)$,
		$ker(\varphi (1))\subseteq ker (\alpha (1)).$ By Radon-Nikodym type
		theorem (Theorem \ref{Radon Nikodym type theorem}), there exists a
		positive contraction $D\in\pi(\mathcal{A})'$    such that
		$$\alpha(X) = V^*D\pi(X)V,~\text{for all}~ X\in\mathcal{A}.$$
		If we assume that  $ker(\alpha (1))\subsetneqq ker (\varphi (1)),$
		there exists a non-zero $h_0\in \mathcal{H}$ such that $DVh_0=0$ but
		$Vh_0\neq0.$ This implies, $D\pi(X)Vh_0=0,\forall~X\in\mathcal{A}.$
		Recalling that ($\mathcal{K},\pi,V$) is a minimal Stinespring triple
		for the  quasi-pure CP map $\varphi,$ $\mathcal{K}=\overline{\mbox{span}}\{\pi
		(X)Vh:X\in
		\mathcal{A}, h\in \mathcal{H}\}=\overline{\mbox{span}}\{\pi (X)Vh_0:X\in  \mathcal{A}\}.$
		Thus $D\pi(X)V=0,~\forall~X\in\mathcal{A}.$ Consequently $\alpha=0.$
		This contradicts the hypothesis that $\alpha$ is non-zero. Therefore, $ker(\alpha (1))= ker (\varphi (1)).$
		
		Now to prove the other implication,  assume that  the CP map
		$\varphi:\mathcal{A}\to \mathscr{B}(\mathcal{H})$ is not quasi-pure,
		then we have  $h_0\in\mathcal{H}$ such that $0\neq Vh_0$ is not
		cyclic for $\pi .$ Let
		$\tilde{\mathcal{K}}=\overline{\mbox{span}}\{\pi(X)Vh_0:X\in
		\mathcal{A}\}$ and let $P$ be the projection of $\mathcal{K}$ onto
		this reducing subspace $\tilde{\mathcal{K}}.$ Set
		$$\alpha (X)= V^*\pi (X)(I-P) V, ~X\in \mathcal{A}.$$
		Then $\alpha:\mathcal{A}\to \mathscr{B}(\mathcal{H})$ defines a CP
		map dominated by $\varphi.$ Clearly, $\alpha (1)h_0=0$ as $Vh_0\in
		\tilde{\mathcal{K}}.$ But $\varphi(1)h_0=V^*Vh_0\neq 0$ as $\langle
		h_0, V^*Vh_0\rangle = \|Vh_0\|^2\neq 0.$  Hence $\mbox{ker}(\alpha
		(1))\neq \mbox{ker}(\varphi (1)).$
		
	\end{proof}
	As an immediate consequence of this theorem we have the following
	corollary.
	
	\begin{corollary}\label{order}
		Suppose $\varphi : \mathcal{A}\to \mathscr{B}(\mathcal{H})$ is a
		quasi-pure CP map and $\alpha :\mathcal{A}\to
		\mathscr{B}(\mathcal{H})$ is a  CP map dominated by $\varphi .$ Then
		$\alpha $ is quasi-pure.
	\end{corollary}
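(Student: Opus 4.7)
The plan is to deduce the corollary directly from the kernel characterization in Theorem \ref{kernel}, with essentially no Stinespring bookkeeping beyond what that theorem already packages. First, I would dispose of the trivial case $\alpha = 0$: the zero CP map has minimal Stinespring triple with $\mathcal{K} = \{0\}$ and $V = 0$, so the quasi-purity condition on the range of $V$ is vacuously satisfied.

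Assume from now on that $\alpha \neq 0$. By Theorem \ref{kernel} applied to $\varphi$, since $\alpha$ is a nonzero CP map dominated by $\varphi$, we have $\ker(\alpha(1)) = \ker(\varphi(1))$. Now let $\beta :\mathcal{A}\to \mathscr{B}(\mathcal{H})$ be any nonzero CP map with $\beta \leq \alpha$. The key observation is that $\beta$ is then also dominated by $\varphi$: indeed, $\varphi - \beta = (\varphi - \alpha) + (\alpha - \beta)$ is a sum of two CP maps, hence CP. Applying Theorem \ref{kernel} once more, this time to the pair $(\varphi, \beta)$, yields $\ker(\beta(1)) = \ker(\varphi(1))$.

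Combining the two equalities gives $\ker(\beta(1)) = \ker(\alpha(1))$. Since $\beta$ was an arbitrary nonzero CP map dominated by $\alpha$, Theorem \ref{kernel} (read in the reverse direction) lets us conclude that $\alpha$ is quasi-pure. There is no real obstacle here; the only thing to be careful about is making sure the hypotheses of Theorem \ref{kernel} are verified at each invocation, in particular that the intermediate map $\beta$ is genuinely dominated by $\varphi$ (which is immediate from the additivity of the CP cone) and that $\alpha \neq 0$ before invoking the theorem in the first step.
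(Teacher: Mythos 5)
Your proof is correct and follows exactly the route the paper intends: the paper's own proof is simply ``Clear from Theorem \ref{kernel},'' and you have filled in the details (transitivity of domination via additivity of the CP cone, the two applications of the kernel characterization, and the vacuous case $\alpha=0$) in the natural way.
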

	\begin{proof}
		Clear from Theorem \ref{kernel}.
	\end{proof}
	
	For CP maps on matrix algebras quasi-purity can be described in
	terms of Choi-Kraus coefficients.

	\begin{theorem}\label{matrix alg}
		
		Let $\varphi:M_{d_1}(\mathbb{C})\to M_{d_2}(\mathbb{C})$ be a CP map
		with a minimal Choi-Kraus decomposition: \begin{equation}
			\label{Choi-Kraus} \varphi (X) =\sum_{j=1}^{k}L_j^*XL_j, ~~X\in
			M_{d_1}(\mathbb{C})\end{equation} where $k\in \mathbb{N}$ and $L_1,
		L_2, \ldots , L_k$ are $d_1\times d_2$ matrices.  Then $\varphi$ is
		\em{quasi-pure} iff the collection $\{L_1h_0, L_2h_0, \ldots ,
		L_kh_0\}$ is linearly independent, whenever $L_ih_0\neq 0$ for some
		$i$ in $\{1, 2, \ldots , k\}$ and $h_0\in \mathbb{C}^{d_2}.$
		
	\end{theorem}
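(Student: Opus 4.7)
I would argue using the explicit Stinespring triple associated with a minimal Choi--Kraus decomposition that appears in the discussion before Example~\ref{first example}. Since the matrices $L_1,\ldots,L_k$ in \eqref{Choi-Kraus} are linearly independent (this is exactly minimality of the Choi--Kraus decomposition), a routine check (using that any $\pi$-reducing subspace has the form $\C^{d_1}\ot W$ for some $W\seq\C^k$) confirms that the triple
\[
\mcal{K}=\C^{d_1}\ot\C^k,\qquad \pi(X)=X\ot I_k,\qquad Vh=\sum_{j=1}^k (L_jh)\ot e_j
\]
is a minimal Stinespring triple for $\varphi$, with $e_1,\ldots,e_k$ the standard basis of $\C^k$. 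Under this identification, $Vh_0\neq 0$ if and only if some $L_jh_0$ is nonzero (because $\norm{Vh_0}^2=\sum_j\norm{L_jh_0}^2$), and $\pi(X)Vh_0=\sum_j(XL_jh_0)\ot e_j$.

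With this setup, quasi-purity of $\varphi$ asserts exactly that, for every $h_0\in\C^{d_2}$ with some $L_ih_0\neq 0$, the linear map
\[
\Phi_{h_0}\colon M_{d_1}(\C)\to(\C^{d_1})^{\oplus k},\qquad X\mapsto(XL_1h_0,XL_2h_0,\ldots,XL_kh_0),
\]
is surjective (after identifying $(\C^{d_1})^{\oplus k}$ with $\C^{d_1}\ot\C^k$ via $\{e_j\}$). So the theorem reduces to a purely linear-algebraic equivalence: for fixed vectors $v_1,\ldots,v_k\in\C^{d_1}$, the map $X\mapsto(Xv_1,\ldots,Xv_k)$ from $M_{d_1}(\C)$ to $(\C^{d_1})^{\oplus k}$ is surjective if and only if $\{v_1,\ldots,v_k\}$ is linearly independent.

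For the ``if'' direction I would extend $\{v_1,\ldots,v_k\}$ to a basis of $\C^{d_1}$ and, given any target $(w_1,\ldots,w_k)$, define $X$ by $Xv_j=w_j$ and send the extending basis vectors to $0$. The ``only if'' direction is the contrapositive: any nontrivial relation $\sum_j c_jv_j=0$ forces the image of $\Phi_{h_0}$ into the proper subspace $\{(w_1,\ldots,w_k):\sum_jc_jw_j=0\}$. The only subtlety worth flagging is that a list $\{L_1h_0,\ldots,L_kh_0\}$ containing the zero vector is automatically linearly dependent, so requiring linear independence of this list already forces every $L_jh_0$ to be nonzero, matching the surjectivity criterion exactly. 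No single step is a real obstacle; the work lies in pinning down the right Stinespring dilation and noticing that quasi-purity collapses to this linear-algebraic question.
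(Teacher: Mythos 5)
Your proposal is correct and follows essentially the same route as the paper: both use the Stinespring triple $\mathcal{K}=\mathbb{C}^{d_1}\otimes\mathbb{C}^k$, $\pi(X)=X\otimes I$, $Vh=\sum_j L_jh\otimes e_j$, and reduce quasi-purity to the linear-algebra fact that $X\mapsto (XL_1h_0,\ldots,XL_kh_0)$ has full range iff the $L_jh_0$ are linearly independent, proved by extending to a basis in one direction and exhibiting the proper subspace cut out by a linear relation in the other. Your explicit remark that minimality of the triple follows from linear independence of the $L_j$'s is a small point the paper leaves implicit.
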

	
	\begin{proof} Here $\varphi$ has the
		minimal Stinespring representation, $(\mathcal{K}, \pi , V)$ where
		$\mathcal{K}= \mathbb{C}^{d_1}\otimes \mathbb{C}^k$, $\pi (X)=
		X\otimes I_{\mathbb{C}^k}$ and $V: \mathbb{C}^{d_2}\to \mathcal{K},$
		is defined by $Vh= \sum _{j=1}^k L_jh\otimes e_j,$ where $e_1, e_2,
		\ldots , e_k$ is an orthonormal basis of $\mathbb{C}^k.$ Assume that
		there exists a nonzero $h_0\in \mathbb{C}^{d_2}$ such that the
		collection
		$\{L_1h_0, L_2h_0, \ldots , L_kh_0\}$ is linearly dependent, and $L_ih_0\neq 0$ for some $i$ in
		$\{1, 2, \ldots , k\}.$ Then $Vh_0$ is non-zero and
		there exist complex numbers $a_j$'s (not all  zero) such that
		\begin{equation}\label{linear}
			\sum_{j=1}^{k}a_jL_jh_0=0.\end{equation} Let
		$\tilde{\mathcal{K}}=\overline{\mbox{span}}\{\pi (X)Vh_0:X\in
		M_{d_1}(\mathbb{C}) \} =\overline{\mbox{span}}\{\sum _{j=1}^k
		X(L_jh_0)\otimes e_j:X\in M_{d_1}(\mathbb{C}) \}$, a subspace of the
		minimal dilation space, $\mathbb{C}^{d_1}\otimes\mathbb{C}^{k}.$ Now
		the condition (\ref{linear}) implies that every vector in
		$\tilde{\mathcal{K}}$ is orthogonal to vectors of the form $\sum
		_{j=1}^k\bar{a}_jv\otimes e_j$ for any $v\in \mathbb{C}^{d_1}.$ In
		particular, $\tilde{\mathcal{K}}$ is not whole of the dilation space
		and $\varphi $ is not quasi-pure.
		
		Now to prove the other implication, suppose  $h_0$ is an element of
		$\mathbb{C}^{d_2}$  such that the collection $\{L_1h_0, L_2h_0,
		\ldots , L_kh_0\}$ is a linearly independent subset of
		$\mathbb{C}^{d_1}$. Extend it to a basis $\{ f_1, f_2, \ldots ,
		f_{d_1}\}$ of $\mathbb {C}^{d_1},$ where $f_i=L_ih_0, i=1,2,
		\ldots,k.$ Fix a basis $\{ e_1, e_2, \ldots , e_k\}$ for
		$\mathbb{C}^k.$  Define linear maps $X_{ir}\in M_{d_1}(\mathbb{C})$
		by
		$$X_{ir}(f_j)=\delta_{r,j}f_{i}:i,j,r\in \{1,2, \ldots,d_1\},$$ where
		$\delta $ is the Kronecker delta function.
		Now  \begin{eqnarray*}
			\mathbb{C}^{d_1}\otimes \mathbb{C}^k & \supseteq  &
			\overline{\mbox{span}}\{\pi (X)Vh_0:X\in M_{d_1}(\mathbb{C})
			\}\\
			&=& \overline{\mbox{span}}\{\sum _{j=1}^k X(L_jh_0)\otimes e_j:X\in
			M_{d_1}(\mathbb{C}) \} \\
			&=& \{ \sum _{j=1}^kXf_j\otimes e_j: X\in M_{d_1}(\mathbb{C})\}\\
			&\supseteq & \{ \sum _{j=1}^kX_{ir}f_j\otimes e_j: 1\leq i,r\leq
			d_1\}
			\\& = & \{ f_i\otimes e_j: 1\leq i\leq
			d_1,1\leq j\leq k\}\\
			&=& \mathbb{C}^{d_1}\otimes \mathbb{C}^k.
		\end{eqnarray*}
		Therefore, $Vh_0$ is a cyclic vector for the representation
		$\pi$, as required. \end{proof}
	
	Recall that the Choi rank of a CP map $\varphi $ on matrix algebras
	is the minimum number of $L_j$'s needed in expressing it in the form
	$\varphi(X) = \sum _{j}L_j^*XL_j.$ The Choi rank of a CP map
	$\varphi :M_{d_1}(\mathbb{C})\to M_{d_2}(\mathbb{C})$ is at most
	$d_1d_2.$
	
	\begin{corollary}
		Let $\varphi:M_{d_1}(\mathbb{C})\to M_{d_2}(\mathbb{C})$ be a
		quasi-pure CP map. Then the Choi rank of $\varphi $ is at most
		$d_1$.
	\end{corollary}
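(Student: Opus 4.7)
The plan is to apply the preceding characterization (Theorem \ref{matrix alg}) directly and derive the bound from a simple dimension count. Fix a minimal Choi--Kraus decomposition $\varphi(X)=\sum_{j=1}^{k} L_j^*XL_j$, so that the Choi rank of $\varphi$ is precisely $k$.

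First I would dispense with the trivial case $\varphi=0$, where $k=0$ and the inequality is immediate. So assume $\varphi\neq 0$, which forces at least one $L_i$ to be non-zero, and hence guarantees the existence of a vector $h_0\in\mathbb{C}^{d_2}$ such that $L_ih_0\neq 0$ for at least one index $i\in\{1,2,\ldots,k\}$.

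Next, I would invoke Theorem \ref{matrix alg}: since $\varphi$ is quasi-pure and $L_ih_0\neq 0$ for some $i$, the collection $\{L_1h_0, L_2h_0, \ldots, L_kh_0\}$ must be linearly independent. But these are $k$ vectors lying in the $d_1$-dimensional space $\mathbb{C}^{d_1}$, so linear independence forces $k\leq d_1$, as desired.

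There is no real obstacle here; the whole point is that the characterization in Theorem \ref{matrix alg} has already done the heavy lifting by translating quasi-purity into a statement about linear independence of vectors in $\mathbb{C}^{d_1}$. The corollary is essentially a one-line dimension count, with the only mild subtlety being to observe that a non-zero CP map admits some $h_0$ for which not all $L_jh_0$ vanish, ensuring the linear independence hypothesis of Theorem \ref{matrix alg} is actually triggered.
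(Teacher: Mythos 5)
Your proposal is correct and follows exactly the paper's argument: the paper also deduces the bound from Theorem \ref{matrix alg} by noting that $\{L_1h_0,\ldots,L_kh_0\}$ is a linearly independent set of $k$ vectors in $\mathbb{C}^{d_1}$, hence $k\leq d_1$. Your additional remarks about the zero map and the existence of a suitable $h_0$ are fine but just make explicit what the paper leaves implicit.
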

	\begin{proof} This follows from Theorem \ref{matrix alg} due to
		linear independence of $\{L_1h_0, L_2h_0, \ldots , L_kh_0\}$ in
		$\mathbb{C}^{d_1}.$
	\end{proof}
	
	\begin{corollary}
		Let $\varphi:M_{d_1}(\mathbb{C})\to M_{d_2}(\mathbb{C})$ be a
		quasi-pure CP map with Choi-Kraus decomposition as in
		(\ref{Choi-Kraus}). Then $\mbox{ker}~(L_i)=~\mbox{ker}~(L_j)$ for
		$1\leq i,j\leq k.$
	\end{corollary}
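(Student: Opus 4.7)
The plan is to derive this as a direct contrapositive application of Theorem \ref{matrix alg}, which already characterizes quasi-purity in terms of the Choi--Kraus coefficients. The entire statement is essentially a one-line corollary.

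First I would set up a contradiction: suppose $\ker(L_i)\neq \ker(L_j)$ for some pair $i,j\in\{1,\dots,k\}$. Without loss of generality, pick $h_0\in \ker(L_j)\setminus \ker(L_i)$, so that $L_jh_0=0$ while $L_ih_0\neq 0$. The collection $\{L_1h_0,L_2h_0,\dots,L_kh_0\}$ then contains the zero vector (namely $L_jh_0$) as well as the nonzero vector $L_ih_0$; any set containing the zero vector is linearly dependent.

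Next I would invoke Theorem \ref{matrix alg}: since $\varphi$ is quasi-pure and $L_ih_0\neq 0$ for some $i$, the set $\{L_1h_0,\dots,L_kh_0\}$ must be linearly independent. This contradicts the previous paragraph, so no such $h_0$ can exist. Hence $\ker(L_j)\subseteq \ker(L_i)$ for every $i,j$, and by symmetry the kernels all coincide.

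There is no real obstacle here; the only thing to double-check is that Theorem \ref{matrix alg} is stated for a \emph{minimal} Choi--Kraus decomposition, which is exactly the hypothesis needed for the stated characterization. Since the corollary is phrased with a decomposition as in (\ref{Choi-Kraus}), which refers to the minimal decomposition from Theorem \ref{matrix alg}, the application is immediate and the proof is just a short contradiction argument.
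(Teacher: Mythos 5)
Your proof is correct and is essentially the paper's own argument: the paper likewise deduces the corollary directly from Theorem \ref{matrix alg} by noting that quasi-purity forces $L_jh_0\neq 0$ for all $j$ whenever $L_ih_0\neq 0$ for some $i$, since a family containing the zero vector is linearly dependent. Your version merely makes the contradiction explicit.
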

	\begin{proof}
		From Theorem \ref{matrix alg}, if $L_ih_0\neq 0$ for some $i$,  then
		$L_jh_0\neq 0$ for all $j$.
	\end{proof}
	
	A minimal Choi-Kraus decomposition of the CP map of  Example
	$\ref{first example}$ is given by,
	$$\varphi (X) = \sum _{j=1}^kL_j^*XL_j, ~~X\in
	M_{d_1}(\mathbb{C}),$$ where $L_j= |\sqrt{p_j}u_j\rangle \langle
	v|$, is the map $w\mapsto \sqrt{p_j} u_j\langle v, w\rangle.$ This
	shows that the Choi rank of a quasi-pure map from
	$M_{d_1}(\mathbb{C})$ to $M_{d_2}(\mathbb{C})$ can be any number in
	$\{1,2, \ldots , d_1\}.$
	
	A CP map $\varphi :M_{d_1}(\mathbb{C})\to M_{d_2}(\mathbb{C})$ is  defined
	to be {\em entanglement breaking} (EB)
	if it has a (not-necessarily minimal) Choi-Kraus decomposition of
	the form $\varphi (X)= \sum _{j=1}^kL_j^*XL_j$, where $L_j$'s are
	rank one operators. Entanglement breaking maps have a special role
	in quantum information theory and there is plenty of literature on
	the same. See  the influential paper of Horodecki, Shor and Ruskai
	\cite {HSR}, and its references as well as  citations for further
	information. It is clear from this definition that the quasi-pure CP
	map considered in Example \ref{first example} is an EB map.
	Conversely, from Theorem \ref{kernel}   or from Theorem \ref{matrix
		alg} it follows that every quasi-pure EB map $\varphi
	:M_{d_1}(\mathbb{C})\to M_{d_2}(\mathbb{C})$ is necessarily of the
	form \ref{example equation}  for some positive matrix $\rho\in
	M_{d_1}(\mathbb{C})$ and unit vector $v\in \mathbb{C}^{d_2}$.

	\section{Minimal completion theorem}
	
	Consider a $2\times 2$ block
	operator matrix of operators on Hilbert spaces,
	$$ N= \left[\begin{array}{cc}
		A&B\\
		C&D\end{array}\right].$$ Assume  that $A$ is strictly positive, that
	is, it is positive and invertible.  Then $N$ is positive  iff $C=B^*
	$ and $D\geq CA^{-1}B$ (See Theorem 1.3.3 of \cite{Bhatia}. This
	book deals with only matrices. But this theorem can  easily be
	generalized to operators on Hilbert spaces). Now consider a
	partially given block operator matrix:
	$$M=\left[\begin{array}{cc}
		A&*\\
		C&*\end{array}\right],$$  where $*$ indicates that the particular
	entry hasn't been specified. Now it is clear that if $A$ is strictly
	positive, then $M$ can always be completed to a positive block
	operator matrix and in fact there is a minimal positive completion
	given by
	$$\left[\begin{array}{cc}
		A&C^*\\
		C&CA^{-1}C^*\end{array}\right].$$ Little more care is needed when
	$A$ is positive but not strictly positive.
	
	\begin{lemma}\label{completion existence lemma}
		Let $\mathcal{K}$ be a proper closed subspace of a Hilbert space
		$\mathcal{H}.$ Suppose $A\in \mathscr{B}(\mathcal{K}), C\in
		\mathscr{B}(\mathcal{K}, \mathcal{K}^\perp)$ are given.Then there
		exists a positive operator, $M\in \mathscr{B}(\mathcal{H})$ such
		that with respect to the decomposition
		$\mathcal{H}=\mathcal{K}\oplus \mathcal{K}^\perp $, it has the form:
		$$M= \left[\begin{array}{cc}
			A&*\\
			C&*\end{array}\right],$$ where $*$ denotes unspecified entries, if
		and only if there exists $q>0$ such that $$C^*C\leq qA.$$ Moreover,
		in such a case, there exists unique minimal $D$ such that
		$\left[\begin{array}{cc}
			A&C^*\\
			C&D\end{array}\right]$ is positive.
	\end{lemma}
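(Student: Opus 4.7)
The plan is to reduce the statement to Douglas's majorization theorem, which I will use repeatedly in the following form: for bounded operators $X, Y$ into a common Hilbert space, $XX^* \leq \lambda YY^*$ for some $\lambda \geq 0$ is equivalent to the existence of a bounded $Z$ with $X = YZ$, and a canonical such $Z$ is singled out uniquely by the range condition $\overline{\mathrm{ran}(Z)} \subseteq \overline{\mathrm{ran}(Y^*)}$.

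Necessity of the inequality is routine. If $M$ is any positive completion, factor $M = N^*N$ and split $N = (N_1\ N_2)$ along the decomposition $\mathcal{K} \oplus \mathcal{K}^\perp$. Comparing the $(1,1)$ and $(2,1)$ blocks gives $A = N_1^*N_1$ and $C = N_2^*N_1$, so
$$ C^*C \;=\; N_1^*N_2 N_2^*N_1 \;\leq\; \norm{N_2}^2 N_1^*N_1 \;=\; \norm{N_2}^2 A, $$
which is the required majorization with $q = \norm{N_2}^2$.

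For sufficiency and the construction of a minimal completion, I will start from $C^*C \leq qA = q\,A^{1/2}(A^{1/2})^*$ and invoke Douglas's theorem to pull out the canonical $W \in \mathscr{B}(\mathcal{K}^\perp, \mathcal{K})$ with $C^* = A^{1/2} W$ and $\overline{\mathrm{ran}(W)} \subseteq \overline{\mathrm{ran}(A^{1/2})}$. Setting $D_0 := W^*W$, the factorization
$$ \begin{pmatrix} A & C^* \\ C & D_0 \end{pmatrix} \;=\; \begin{pmatrix} A^{1/2} \\ W^* \end{pmatrix} \begin{pmatrix} A^{1/2} & W \end{pmatrix} $$
makes positivity manifest, so $D_0$ really is a completion.

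The heart of the argument is minimality. Given any positive completion $M' = \begin{pmatrix} A & C^* \\ C & D' \end{pmatrix}$, write $M' = N^*N$, split $N = (N_1\ N_2)$, and polar-decompose $N_1 = U A^{1/2}$, where $U$ is a partial isometry with initial space $\overline{\mathrm{ran}(A^{1/2})}$. Then $C^* = N_1^*N_2 = A^{1/2}(U^* N_2)$, and because $\mathrm{ran}(U^*) = \overline{\mathrm{ran}(A^{1/2})}$, the operator $U^*N_2$ satisfies the Douglas range constraint, hence the uniqueness clause forces $U^*N_2 = W$. Using $UU^* \leq I$,
$$ D' \;=\; N_2^*N_2 \;\geq\; N_2^*(UU^*)N_2 \;=\; (U^*N_2)^*(U^*N_2) \;=\; W^*W \;=\; D_0, $$
so $D_0$ is a minimum in the positive order (and minima, when they exist, are automatically unique). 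The delicate step — and the main obstacle, since $A$ need not be invertible and the naive Schur-complement calculus collapses — is precisely this identification of the polar-decomposition piece $U^*N_2$ with the canonical Douglas solution $W$; once that is in place the comparison $D' \geq D_0$ reduces to the trivial projection bound $UU^* \leq I$.
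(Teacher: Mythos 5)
Your proof is correct, but it follows a genuinely different route from the paper's. For necessity the paper factors $C=D^{\frac12}KA^{\frac12}$ with $K$ a contraction (citing Bhatia) to get $C^*C=A^{\frac12}K^*DKA^{\frac12}\leq qA$; your square-root factorization $M=N^*N$, $C^*C=N_1^*N_2N_2^*N_1\leq \|N_2\|^2 A$ is an equivalent, equally valid computation. The real divergence is in sufficiency and minimality: the paper regularizes, showing $C(A+t)^{-1}C^*\leq qI$ is monotonically increasing as $t\downarrow 0$ and taking the SOT limit as the minimal $D$, and then only asserts that ``similar arguments'' give minimality and uniqueness; you instead invoke Douglas's majorization theorem to produce the canonical $W$ with $C^*=A^{1/2}W$ and $\overline{\mathrm{ran}(W)}\subseteq\overline{\mathrm{ran}(A^{1/2})}$, set $D_0=W^*W$, and prove minimality explicitly by polar-decomposing $N_1=UA^{1/2}$ and identifying $U^*N_2$ with $W$ via the uniqueness clause of Douglas's theorem. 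The two constructions agree: since $A^{1/2}(A+t)^{-1}A^{1/2}$ increases to the projection onto $\overline{\mathrm{ran}(A^{1/2})}$ and $\mathrm{ran}(W)$ lies in that subspace, $C(A+t)^{-1}C^*=W^*A^{1/2}(A+t)^{-1}A^{1/2}W\uparrow W^*W$. What your approach buys is a complete, explicit proof of the minimality and uniqueness assertion (the step the paper leaves sketched), and a closed-form description of the minimal $D$ as a generalized Schur complement; what it costs is the importation of Douglas's theorem, whereas the paper's argument is self-contained, using only the $2\times 2$ positivity criterion, functional calculus, and monotone convergence. One small point worth making explicit in your write-up: a positive completion is self-adjoint, so its $(1,2)$ block is forced to equal $C^*$ — you use this tacitly when you write $C^*=N_1^*N_2$ and when you parametrize completions by $D$ alone.
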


	\begin{proof}
		Suppose
		$$M= \left[\begin{array}{cc}
			A&B\\
			C&D\end{array}\right],$$ is a positive operator. Then by Proposition
		1.3.2 of  \cite{Bhatia},  $A, D$ are positive, and there exists a
		contraction $K\in \mathscr{B}(\mathcal{K}, \mathcal{K}^\perp)$ such
		that $ C = D^{\frac{1}{2}}KA^{\frac{1}{2}}.$ Therefore,
		$$C^*C= A^{\frac{1}{2}}K^*DKA^{\frac{1}{2}}\leq qA$$
		where $q= \|K^*DK\|.$ Conversely, suppose $C^*C\leq qA$ for some
		$q>0$. Let $C=V|C|$ be the polar decomposition of $C$. Then
		$$
		\left[\begin{array}{cc}
			A&C^*\\
			C&qVV^*\end{array}\right]= \left[\begin{array}{cc}
			\frac{1}{\sqrt{q}}|C|&0\\
			0&\sqrt{q}I\end{array}\right].
		\left[\begin{array}{cc}
			I&V^*\\
			V&VV^*\end{array}\right].\left[\begin{array}{cc}
			\frac{1}{\sqrt{q}}|C|&0\\
			0&\sqrt{q}I\end{array}\right]+ \left[\begin{array}{cc}
			A-\frac{1}{q}C^*C&0\\
			0&0\end{array}\right]\geq 0.$$ As $V$ is a partial isometry
		$qVV^*\leq qI.$ By functional calculus for $0<s<t<\infty $, $(A+s),
		(A+t)$ are invertible and $(A+s)^{-1}\geq (A+t)^{-1}.$ Also,
		$$\left[\begin{array}{cc}
			A+t&C^*\\
			C&qI\end{array}\right]\geq 0,$$ implies that $C(A+t)^{-1}C^*\leq
		qI.$ Therefore, as $t\downarrow 0$, $C(A+t)^{-1}C^*$ is a
		monotonically increasing family bounded by $qI$. Consequently,
		$C(A+t)^{-1}C^*\uparrow D$ in SOT for some positive bounded operator
		$D$. Observe that, as $C(A+t)^{-1}C^*\leq D$ for every $t>0$,
		$$\left[\begin{array}{cc}
			A+t&C^*\\
			C&D\end{array}\right]\geq 0.$$ Taking strong operator topology limit
		as $t$ decreases to $0,$
		$$\left[\begin{array}{cc}
			A&C^*\\
			C&D\end{array}\right]\geq 0.$$ Similar arguments show that this is
		the unique minimal solution. \end{proof}
	
	These observations lead to some necessary conditions for the existence of
	CP completion of linear maps. Here and elsewhere, by $\mathcal{A}_+$
	we mean positive elements of $\mathcal{A}.$
	
	\begin{theorem} \label{completion existence theorem}
		%\textbf{(Completion Existence theorem):}
		Let $\mathcal{A},
		\mathcal{B}$ be $C^*$-algebras with $\mathcal{B}\subseteq
		\mathscr{B}(\mathcal{H})$ for some Hilbert space ${\mathcal{H}}.$
		Fix a projection $R\in \mathcal{B}$. Let $\beta :\mathcal{A}\to
		\mathcal{B}.R$ be a linear map.  If $\beta $ is CP completable then
		the following properties hold:
		
		(i) The map $X\mapsto R\beta(X)$ is completely positive;
		
		(ii) There exists $q\geq 0$ such that,
		$$\beta(X)^*(1-R)\beta(X)\leq q\|X\|  R\beta(X), ~~\forall X \in \mathcal{A}_+.$$
		
		(iii) For $X=[X_{ij}]_{1\leq i,j\leq n}$ in $M_n(\mathcal{A})_+$,
		$[\beta(X_{ij})]_{1\leq i,j\leq n}$ in $M_n(\mathcal{B}R)$ is
		positive completable to an operator in $M_n(\mathcal{B}).$
		
	\end{theorem}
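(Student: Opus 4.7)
The plan is to fix a CP completion $\varphi : \mathcal{A} \to \mathcal{B}$ of $\beta$, so that $\beta(X) = \varphi(X)R$ for every $X \in \mathcal{A}$, and then to verify each of (i)--(iii) directly, exploiting only the CP property of $\varphi$ and the fact that $R$ is a projection. For (i), since $R^2 = R$ one has $R\beta(X) = R\varphi(X)R$, and this is a compression of the CP map $\varphi$ by the fixed element $R$, hence CP. For (iii), complete positivity of $\varphi$ says that each amplification $M_n(\varphi) : M_n(\mathcal{A}) \to M_n(\mathcal{B})$ is positive, so for any $[X_{ij}] \in M_n(\mathcal{A})_+$ the matrix $[\varphi(X_{ij})]$ is positive in $M_n(\mathcal{B})$; right-multiplication by $\mathrm{diag}(R,\ldots,R)$ recovers $[\varphi(X_{ij})R] = [\beta(X_{ij})]$, exhibiting $[\varphi(X_{ij})]$ as the desired positive completion.

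The only substantive point is (ii). For $X \in \mathcal{A}_+$ the operator $\varphi(X)$ is positive, and I would decompose it as a $2 \times 2$ block with respect to $\mathcal{H} = R\mathcal{H} \oplus (1-R)\mathcal{H}$:
$$\varphi(X) = \begin{pmatrix} A & C^* \\ C & D \end{pmatrix}, \qquad A = R\varphi(X)R, \ \ C = (1-R)\varphi(X)R, \ \ D = (1-R)\varphi(X)(1-R).$$
Proposition 1.3.2 of \cite{Bhatia} (the same tool used in the proof of Lemma \ref{completion existence lemma}) supplies a contraction $K$ with $C = D^{1/2} K A^{1/2}$. Since $K^*DK \geq 0$ and $\|K^*DK\| \leq \|D\|$, one has $K^*DK \leq \|D\| I$, and sandwiching by $A^{1/2}$ gives
$$C^*C = A^{1/2} K^* D K A^{1/2} \leq \|D\|\, A \leq \|\varphi\|\,\|X\|\, A.$$
Unpacking, $\beta(X)^*(1-R)\beta(X) = R\varphi(X)(1-R)\varphi(X)R = C^*C$ and $R\beta(X) = A$, so taking $q = \|\varphi\|$ delivers the claimed inequality.

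None of the steps is genuinely hard; the main obstacle, if any, is recognising that the block-matrix device already developed for Lemma \ref{completion existence lemma} is precisely what produces the norm estimate in (ii), after which the whole proof is bookkeeping. No appeal to Stinespring dilations or the Radon--Nikodym theorem is needed.
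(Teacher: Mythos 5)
Your proposal is correct and follows essentially the same route as the paper: compress by $R$ for (i), use the block decomposition $\mathcal{H}=R\mathcal{H}\oplus(1-R)\mathcal{H}$ together with the factorization $C=D^{1/2}KA^{1/2}$ from Proposition 1.3.2 of \cite{Bhatia} to get (ii) with $q=\|\varphi\|$, and observe $[\beta(X_{ij})]=[\varphi(X_{ij})](1\otimes R)$ for (iii). No differences worth noting.
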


	\begin{proof}
		Here $\beta :\mathcal{A}\to \mathcal{B}.R$ is given to be CP
		completable. Let, $\varphi:\mathcal{A}\to\mathcal{B}(\mathcal{H})$
		be one such completion for $\beta,$ i.e
		$\beta(X)=\varphi(X)R,~~\forall X\in\mathcal{A}.$
		
		(i) Then the map $X\mapsto R\beta(X)=R\varphi(X)R$ is completely
		positive.

		(ii) For every $X \in \mathcal{A} _+,$  with respect to the
		decomposition $\mathcal{H}= R(\mathcal{H})\oplus
		(I-R)(\mathcal{H})$,
		$$\varphi(X)=
		\left[\begin{array}{cc}   R\varphi(X)R&R\varphi(X)(1-R)\\
			(1-R)\varphi(X)R&(1-R)\varphi(X)(1-R)\end{array}\right],$$ is
		positive. Now as in the proof of  Lemma \ref{completion existence
			lemma},
		$$R\varphi(X)^*(1-R)\varphi(X)R\leq \|(1-R)\varphi(X)(1-R)\| R\varphi(X)R \leq q \|X \| R\varphi(X)R, $$ where $q= \|\varphi \|.$
		Hence we have,
		$$\beta(X)^*(1-R)\beta(X)\leq q \|X\| R\beta(X), ~~\forall X \in
		\mathcal{A}_+.$$

		(iii) For $X=[X_{ij}]_{1\leq i,j\leq n}\geq 0$ in
		$M_n(\mathcal{A}),$ $[\varphi(X_{ij})]_{1\leq i,j\leq n}$ in
		$M_n(\mathcal{B})$ is positive and $[\beta(X_{ij})]_{1\leq i,j\leq
			n}=[\varphi(X_{ij})R]_{1\leq i,j\leq n}=[\varphi(X_{ij})]_{1\leq
			i,j\leq n}(1\otimes R).$
		Therefore, $[\beta(X_{ij})]_{1\leq i,j\leq n}$ is positive completable. \end{proof}

	In the following result we observe that the condition (iii) of this
	theorem is also sufficient in some special cases.

	\begin{theorem} \label{completion existence theorem}
		%\textbf{(Completion Existence theorem):}
		Let $\mathcal{A}=M_d(\mathbb{C})$ for some $d\geq 1.$ Let
		$\mathcal{B}$ be a $C^*$-algebra with $\mathcal{B}\subseteq
		\mathscr{B}(\mathcal{H})$ for some Hilbert space ${\mathcal{H}}.$
		Fix a projection $R\in \mathcal{B}$. Let $\beta :\mathcal{A}\to
		\mathcal{B}.R$ be a linear map.  Then $\beta $ is CP completable if
		and only if $[\beta(E_{ij} )]_{1\leq i,j\leq d}$ can be completed to
		a positive matrix of $M_d(\mathcal{B})$ where $E_{i,j}, 1\leq
		i,j\leq d$ are the matrix units of $M_d(\mathbb{C}).$ \end{theorem}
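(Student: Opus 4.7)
The plan is to identify this statement as essentially a restatement of Choi's theorem in the presence of the projection $R$. Recall that Choi's theorem gives a bijection between CP maps $\varphi : M_d(\mathbb{C}) \to \mathcal{B}$ and positive elements of $M_d(\mathcal{B})$, via the correspondence $\varphi \leftrightarrow C_\varphi := [\varphi(E_{ij})]_{1 \le i,j \le d}$. I will use this to convert CP completions of $\beta$ into positive completions of $[\beta(E_{ij})]$ and vice versa.

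For the forward direction, I would simply specialize condition (iii) of the previous theorem to $n = d$ and $X_{ij} = E_{ij}$. Explicitly, given a CP completion $\varphi : M_d(\mathbb{C}) \to \mathcal{B}$, the Choi matrix $C_\varphi \in M_d(\mathcal{B})_+$ satisfies
\[
C_\varphi \cdot (I_d \otimes R) = [\varphi(E_{ij}) R]_{1 \le i,j \le d} = [\beta(E_{ij})]_{1 \le i,j \le d},
\]
so $C_\varphi$ is the required positive completion in $M_d(\mathcal{B})$.

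For the reverse direction, suppose $[\beta(E_{ij})]$ admits a positive completion $P = [P_{ij}] \in M_d(\mathcal{B})_+$, meaning $P_{ij} R = \beta(E_{ij})$ for every $i, j$. I would define $\varphi : M_d(\mathbb{C}) \to \mathcal{B}$ by $\varphi(E_{ij}) := P_{ij}$ and extend linearly. By Choi's theorem, $\varphi$ is completely positive since its Choi matrix $[\varphi(E_{ij})] = P$ is positive. Moreover, $\varphi(E_{ij}) R = P_{ij} R = \beta(E_{ij})$ holds on the basis $\{E_{ij}\}$, so by linearity $\varphi(X) R = \beta(X)$ for every $X \in M_d(\mathbb{C})$. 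Hence $\varphi$ is a CP completion of $\beta$.

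There is no serious obstacle here — the content of the proof is just the Choi correspondence combined with the observation that right-multiplication by $I_d \otimes R$ on $M_d(\mathcal{B})$ corresponds entrywise to the passage from $\varphi$ to $\beta = \varphi(\cdot) R$. The only point requiring care is to verify that the notion of "positive completable" appearing in condition (iii) of the previous theorem translates precisely to the existence of a positive $[P_{ij}] \in M_d(\mathcal{B})$ satisfying $P_{ij} R = \beta(E_{ij})$; with that convention fixed, Choi's theorem handles both implications symmetrically.
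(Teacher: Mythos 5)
Your proof is correct and follows essentially the same route as the paper: the reverse direction defines $\varphi$ on the matrix units by the entries of the positive completion and invokes Choi's theorem, while the forward direction is the specialization of condition (iii) of the preceding theorem to $X=[E_{ij}]$ (which is positive in $M_d(M_d(\mathbb{C}))$), exactly as the paper indicates. No gaps.
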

	\begin{proof}
		If $[Y_{ij}]$ is  a positive completion of $[\beta(E_{ij})]$, define
		a linear map $\varphi :M_d\to \mathcal{B}$ by setting $\varphi
		(E_{ij})=Y_{ij}, 1\leq i,j\leq d.$ Then by a well-known result of
		Choi (See \cite{Choi} ), $\varphi  $ defines a completely positive
		map. By linearity it is clear that $\varphi  $ is a completion of
		$\beta .$ The converse has been observed in the previous Theorem.
	\end{proof} So far we do not have a general necessary and sufficient
	condition for CP completability. However, the following result shows
	that if a linear map admits a CP completion then there is a unique
	minimal CP completion. This is the main result of this Section.

	\begin{theorem} \label{Minimal completion theorem}
		\textbf{(Minimal Completion theorem):} Let $\mathcal{A},
		\mathcal{B}$ be  $C^*$-algebras with $\mathcal{B}\subseteq
		\mathscr{B}(\mathcal{H})$ for some Hilbert space ${\mathcal{H}}.$
		Fix $R\in \mathscr{B}(\mathcal{H}).$ Let  $\beta :\mathcal{A}\to
		\mathcal{B}.R$
		be CP completable. Then there  exists a unique CP completion  $\alpha
		$ such that if $\psi $ is any CP completion of $\beta $ then
		$\alpha $ is dominated by $\psi .$
	\end{theorem}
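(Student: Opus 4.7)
My plan is to construct the minimal completion directly from any given CP completion using Arveson's Radon-Nikodym theorem (Theorem \ref{Radon Nikodym type theorem}), and then bootstrap the dominance property to all other completions through a pairing argument that exploits minimality of the Stinespring dilation.

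Fix any CP completion $\varphi$ of $\beta$ and let $(\mathcal{K}, \pi, V)$ be its minimal Stinespring triple. Set $\mathcal{K}_0 := \overline{\mbox{span}}\{\pi(X) V R h : X \in \mathcal{A}, h \in \mathcal{H}\}$; this subspace is $\pi$-invariant, so the orthogonal projection $P$ onto $\mathcal{K}_0$ lies in $\pi(\mathcal{A})'$. Define $\alpha(X) := V^* P \pi(X) V$. Since $P$ is a positive contraction in $\pi(\mathcal{A})'$, Theorem \ref{Radon Nikodym type theorem} gives $\alpha \leq \varphi$; and because $V R h \in \mathcal{K}_0$ for every $h$, we have $\alpha(X) R h = V^* \pi(X) P V R h = V^* \pi(X) V R h = \beta(X) h$, so $\alpha$ is a CP completion of $\beta$.

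The key lemma is that any CP completion $\eta$ of $\beta$ with $\eta \leq \varphi$ automatically dominates $\alpha$. By Theorem \ref{Radon Nikodym type theorem} there is a positive contraction $D \in \pi(\mathcal{A})'$ with $\eta(X) = V^* D \pi(X) V$, and the identity $\eta(X) R = \beta(X) = \varphi(X) R$ rewrites (using that $D$ commutes with $\pi$) as $V^* \pi(X)(D - I) V R = 0$ for every $X \in \mathcal{A}$. For any $Y \in \mathcal{A}$ and $h, h' \in \mathcal{H}$, this yields
$$\langle (D - I) \pi(X) V R h, \pi(Y) V h' \rangle = \langle \pi(Y^*X)(D - I) V R h, V h' \rangle = 0,$$
so by minimality of the Stinespring triple $(D - I) \pi(X) V R h = 0$. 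Hence $(D - I)$ vanishes on $\mathcal{K}_0$, i.e.\ $D P = P$, and taking adjoints gives $P D = P$. With respect to $\mathcal{K} = P \mathcal{K} \oplus (I - P) \mathcal{K}$ both $D$ and $P$ become block diagonal with $D = I \oplus D_1$ and $P = I \oplus 0$, so $D - P \geq 0$ and therefore $\eta \geq \alpha$.

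Finally, for an arbitrary CP completion $\psi$ of $\beta$, apply the same machinery to $\sigma := \varphi + \psi$, a CP completion of $2\beta$. With minimal Stinespring $(\mathcal{K}', \pi', V')$ for $\sigma$, Theorem \ref{Radon Nikodym type theorem} produces positive contractions $D_\varphi, D_\psi \in \pi'(\mathcal{A})'$ with $D_\varphi + D_\psi = I$ that recover $\varphi, \psi$. The equality $\varphi(X) R = \psi(X) R$ and the same pairing argument force $D_\varphi - D_\psi$ to vanish on $\mathcal{K}'_0 := \overline{\mbox{span}}\{\pi'(X) V' R h\}$; combined with $D_\varphi + D_\psi = I$ this yields $D_\varphi P' = D_\psi P' = \tfrac{1}{2} P'$ (where $P'$ projects onto $\mathcal{K}'_0$), and the same decomposition argument gives $D_\varphi, D_\psi \geq \tfrac{1}{2} P'$. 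Therefore $\eta(X) := \tfrac{1}{2} (V')^* P' \pi'(X) V'$ is a CP completion of $\beta$ with $\eta \leq \varphi$ and $\eta \leq \psi$; the key lemma then delivers $\alpha \leq \eta \leq \psi$. Uniqueness is automatic: any two minima $\alpha_1, \alpha_2$ satisfy $\alpha_1 \leq \alpha_2 \leq \alpha_1$, forcing $\alpha_1 = \alpha_2$. The principal obstacle is the key lemma, whose proof hinges on the simultaneous use of $D \in \pi(\mathcal{A})'$ and the minimality of $(\mathcal{K}, \pi, V)$ to upgrade the identity on $R\mathcal{H}$ into the operator inequality $D \geq P$.
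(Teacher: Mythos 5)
Your proposal is correct. The object you construct is exactly the paper's minimal completion: the compression $\alpha(X)=V^*P\pi(X)V$ of a fixed completion $\varphi$ to the reducing subspace $\mathcal{K}_0=\overline{\mathrm{span}}\{\pi(X)VRh\}$. Where you diverge is in how minimality is verified. The paper takes two arbitrary completions $\varphi_1,\varphi_2$, forms the analogous cut-downs $\alpha_1,\alpha_2$, and shows $\alpha_1=\alpha_2$ directly by building a unitary intertwiner $U:\tilde{\mathcal{K}}_1\to\tilde{\mathcal{K}}_2$ from the observation that $\langle \pi_i(X)V_iRg,\pi_i(Y)V_iRh\rangle=\langle Rg,\beta(X^*Y)Rh\rangle$ depends only on $\beta$; minimality then follows at once since the cut-down of $\psi$ is dominated by $\psi$. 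You instead work entirely through Arveson's Radon--Nikodym derivatives: your key lemma (that a completion $\eta\le\varphi$ has derivative $D$ with $DP=PD=P$, hence $D\ge P$ and $\eta\ge\alpha$) is sound, and you correctly recognize that it does not by itself handle a completion $\psi$ incomparable with $\varphi$, which is why you pass to $\sigma=\varphi+\psi$ and extract the common minorant $\tfrac12(V')^*P'\pi'(\cdot)V'$ before invoking the lemma. Both routes work; the paper's unitary-equivalence argument is shorter and makes the completion-independence of $\alpha$ explicit (essentially a GNS-type uniqueness statement for $\beta$ restricted to $R\mathcal{H}$), while your argument localizes everything in the commutant $\pi(\mathcal{A})'$ and yields the quantitative inequality $D\ge P$ on derivatives as a by-product.
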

	\begin{proof}
		Let $\varphi _i : \mathcal{A}\to \mathcal{B}$ be  CP completions of
		$\beta $ for $i=1,2.$  Let $(\mathcal{K}_i, \pi _i, V_i)$ be the
		unique minimal Stinespring representations of $\varphi _i,$ so that
		\begin{eqnarray*}
			\varphi _i(X) &=& V_i^*\pi _i(X)V_i, ~~X\in \mathcal{A};\\
			\mathcal{K}_i&=& \overline{\mbox{span}}\{\pi _i(X)V_ih:X\in
			\mathcal{A}, h\in \mathcal{H}\},
		\end{eqnarray*} for $i=1,2.$ Now take $ \tilde{\mathcal{K}_i}=\overline{\mbox{span}}\{\pi
		_i(X)V_iRh:X\in \mathcal{A}, h\in \mathcal{H}\}$. Note that
		$\tilde{\mathcal{K}_i}$ is a reducing subspace for the
		representation $\pi _i.$ Let $Q_i$ be the projection of
		$\mathcal{K}_i$ to $\tilde{\mathcal{K}_i}.$ Set $$\alpha _i(X)=
		V_i^*\pi _i(X)Q _iV _i, ~X\in \mathcal{A}.$$ Then  $\alpha _i$ is a
		CP map dominated by $\varphi _i$. Also $\alpha _i(X)R=\varphi _i
		(X)R= \beta (X).$ Hence it is a completion of $\beta .$  We need to
		show that $\alpha _i$ is independent of $i.$ Define
		$U:\tilde{\mathcal {K}_1}\to \tilde{\mathcal{K}_2}$ by setting
		$$U\pi _1(X)V_1Rh= \pi _2(X)V_2Rh, X\in \mathcal{A}, h\in \mathcal{H}.$$
		By direct computation, $U$ is isometric as:
		\begin{eqnarray*}
			\langle \pi _i(X)V_iRg, \pi _i(Y)V_iRh\rangle &=& \langle Rg,
			V_i^*\pi _i(X^*Y)V_iRh\rangle \\
			&=& \langle Rg, \varphi _i(X^*Y)Rh\rangle\\
			&=& \langle Rg,
			\beta(X^*Y)Rh\rangle ,
		\end{eqnarray*}
		for every $X,Y\in \mathcal{A}, g,h\in \mathcal{H}.$ From the
		definition of $\tilde{\mathcal{K}_1}, \tilde{\mathcal{K}_2}$ and $U$, $U$
		extends to a unitary and satisfies the following identity : $$U\pi_1(X)Q_1=Q_2\pi_2(X)U,~\forall~X\in\mathcal{A}.$$
		Using the definition of $U$,  and the fact
		that $\beta(X)h=\varphi _1(X)R=\varphi _2(X)R$, we have,
		$$\beta(X)h=V_2^*Q_2\pi _2(X)V_2Rh=V_2^*Q_2U\pi _1(X)V_1Rh= V_1^*Q_1\pi
		_1(X)V_1Rh, \forall X\in \mathcal{A}, h\in \mathcal{H}.$$ Since the
		collection of vectors of the form $\pi _1(X)V_1Rh$ is total in
		$\tilde{K}_1$, we get $V_2^*Q_2U=V_1^*Q_1$ or equivalently
		$U^*Q_2V_2=Q_1V_1.$ Hence $ Q_2V_2=UQ_1V_1.$ Now, for all
		$X\in\mathcal{A},$
		$$\alpha _2(X)= V_2^*Q_2\pi _2(X)Q_2V_2=
		V_2^*Q_2\pi _2(X)UQ_1V_1=V_2^*Q_2U\pi _1(X)Q_1V_1=V_1^*Q_1\pi
		_1(X)Q_1V_1=\alpha _1 (X).$$ \end{proof}

	Instead of assuming that $\mathcal{B}$ is a subalgebra of
	$\mathcal{B}(\mathcal{H})$ for some Hilbert space $\mathcal{H}$ and
	using the Stinespring representation, we could have used the Hilbert
	$C^*$-module language  and Paschke's version of Stinespring's
	theorem (See \cite{Paschke} ) in this proof. We have not opted for
	such an approach to keep the presentation more accessible.

	\section{Almost everywhere equivalence for CP maps}
	
	Generalizing the notion of equal almost everywhere with respect to
	a state for CP maps, we have the following definition.
	
	\begin{defn} Let $\mathcal{A}$ be a $C^*$-algebra and let
		$\mathcal{H}$ be a Hilbert space. Let $\varphi , \psi$ be linear
		maps from $\mathcal{A}$ to $\mathscr{B}(\mathcal{H}).$ Suppose $R\in
		\mathscr{B}(\mathcal{H}).$ Then $\psi $ is said to be $R$-equivalent
		to $\varphi $ if
		\begin{equation}\label{R-equivalence}
			\varphi(X)R= \psi(X)R, ~~\forall X\in \mathcal{A}.
		\end{equation}
		This is denoted by $\varphi \underset{R}{=} \psi .$
	\end{defn}
	
	Comparing with Definition \ref{def:equal_ae},  $\varphi $ is equal
	almost everywhere to $\psi $ with respect to a CP map $\xi $, if and
	only if $\varphi \underset{P}=\psi $ where $P$ is the support
	projection of $\xi .$ Note that $\varphi \underset{R}=0$ if and only
	if $\varphi(X)R=0$ for every $X\in \mathcal{A}.$

	\begin{theorem}\label{decomposition}
		Let $\mathcal{A}$ be a unital $C^*$-algebra, $\mathcal{H}$ be a
		Hilbert space and let $\varphi :\mathcal{A}\to
		\mathscr{B}(\mathcal{H})$
		%\psi :\mathcal{A}\to \mathscr{B}(\mathcal {H})$
		be a completely positive map. Then for any
		$R~\in\mathscr{B}(\mathcal{H}),$ there exists a unique CP map
		$\alpha:= \alpha _R$ with following two decomposition properties:
		\begin{enumerate}
			\item $\varphi$ decomposes as $\varphi =\alpha
			+\varphi _1$ where $\alpha , \varphi _1$ are CP maps satisfying
			\begin{equation}
				\varphi \underset{R}{=} \alpha  ,~~\varphi _1\underset{R}{=} 0.
			\end{equation}
			\item If $\psi :\mathcal{A}\to \mathscr{B}(\mathcal {H})$ is any CP map
			such that $ \varphi \underset{R}{=} \psi $, then $\psi $ decomposes
			as $\psi =\alpha +\psi _1$ with these CP maps satisfying
			\begin{equation}
				\varphi \underset{R}{=} \alpha \underset{R}{=} \psi , ~~ \varphi
				_1\underset{R}{=} \psi _1\underset{R}{=} 0.
			\end{equation}
		\end{enumerate}
	\end{theorem}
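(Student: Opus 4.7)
The plan is to reduce the statement directly to the Minimal Completion Theorem (Theorem \ref{Minimal completion theorem}). Taking $\mathcal{B}=\mathscr{B}(\mathcal{H})$, I would first define the linear map
\[
\beta\colon\mathcal{A}\to\mathscr{B}(\mathcal{H})\cdot R,\qquad \beta(X)=\varphi(X)R.
\]
Since $\varphi$ is itself a CP completion of $\beta$, the map $\beta$ is CP completable. Let $\alpha$ be the unique minimal CP completion of $\beta$ furnished by Theorem \ref{Minimal completion theorem}. The key observation is that the set of CP maps $\psi\colon\mathcal{A}\to\mathscr{B}(\mathcal{H})$ with $\varphi\underset{R}{=}\psi$ coincides exactly with the set of CP completions of $\beta$, so the domination property of $\alpha$ already encodes condition (2).

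Concretely, for (1) I would set $\varphi_1:=\varphi-\alpha$. Because $\alpha\leq\varphi$ by minimality, $\varphi_1$ is CP; by construction $\alpha(X)R=\beta(X)=\varphi(X)R$, giving $\varphi\underset{R}{=}\alpha$ and $\varphi_1(X)R=0$, i.e.\ $\varphi_1\underset{R}{=}0$. For (2), given any CP map $\psi$ with $\varphi\underset{R}{=}\psi$, the identity $\psi(X)R=\varphi(X)R=\beta(X)$ says $\psi$ is a CP completion of $\beta$, so by minimality $\alpha\leq\psi$; setting $\psi_1:=\psi-\alpha$ yields a CP summand with $\psi_1(X)R=\psi(X)R-\alpha(X)R=0$.

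For uniqueness of $\alpha$, suppose $\alpha'$ satisfies both (1) and (2). From (1) applied to $\alpha'$, the map $\alpha'$ is a CP completion of $\beta$ and $\varphi\underset{R}{=}\alpha'$; from (2) applied to $\alpha'$ with the test map $\psi=\alpha$ (which is CP and $R$-equivalent to $\varphi$), one gets $\alpha'\leq\alpha$. Symmetrically, applying property (2) for $\alpha$ with $\psi=\alpha'$ yields $\alpha\leq\alpha'$, hence $\alpha=\alpha'$; this is also immediate from the uniqueness clause of Theorem \ref{Minimal completion theorem}.

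I do not anticipate a substantive obstacle: the entire content has been packaged into the minimal completion theorem, and the proof is essentially a translation between ``$R$-equivalent to $\varphi$'' and ``CP completion of $\beta$.'' The only point worth stating carefully is that domination $\alpha\leq\psi$ automatically forces $\psi-\alpha$ to be CP, which is exactly what is needed to extract the decomposition $\psi=\alpha+\psi_1$ with $\psi_1\underset{R}{=}0$.
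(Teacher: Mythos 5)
Your proposal is correct and follows essentially the same route as the paper: both reduce the statement to Theorem \ref{Minimal completion theorem} applied to $\beta(X)=\varphi(X)R$, take $\alpha$ to be the minimal CP completion, and set $\varphi_1=\varphi-\alpha$, $\psi_1=\psi-\alpha$. The only cosmetic difference is that the paper re-exhibits $\alpha$ explicitly via the Stinespring triple of $\varphi$ before invoking minimality, whereas you cite the theorem's existence clause directly, which suffices.
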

	\begin{proof}
		Define $\beta :\mathcal{A}\to \mathscr{B}(\mathcal{H})$ by $\beta
		(X)= \varphi(X)R.$ Now $\beta $ is CP completable with $\varphi $
		being one such completion. Hence Theorem~\ref{Minimal completion theorem} is
		applicable. Let $(\mathcal{K}, \pi , V)$ be a minimal Stinespring
		representation of $\varphi .$ Take $
		\tilde{\mathcal{K}}=\overline{\mbox{span}}\{\pi (X)VRh:X\in
		\mathcal{A}, h\in \mathcal{H}\}$. Let $Q$ be the projection of
		$\mathcal{K}$ onto the reducing subspace $\tilde{\mathcal{K}}.$ Set
		$$\alpha (X)= V^*\pi (X)Q V, ~X\in \mathcal{A}.$$ Then
		$\alpha $ is a CP map dominated by $\varphi $. Now
		Theorem~\ref{Minimal completion theorem} implies that $\alpha$ is
		the minimal CP completion of $\beta $.
		Take $\varphi _1=\varphi -\alpha $ and $\psi _1= \psi -\alpha $.
		Now decomposition properties are easy verifications. The uniqueness
		follows from the uniqueness of minimal completion.
	\end{proof}

	This leads to the main theorem of this Section. It shows some kind of rigidity of quasi-pure maps, in the
	sense  that under
	some mild conditions any CP map which is $R$-equivalent to a
	quasi-pure CP map is the map itself.
	\begin{theorem}\label{main}
		Let $\mathcal{A}$  be a unital $C^*$-algebra and let $\mathcal{H}$
		be a Hilbert space.  Let $\varphi :\mathcal{A}\to
		\mathscr{B}(\mathcal{H}), \psi :\mathcal{A}\to
		\mathscr{B}(\mathcal{H})$ be completely positive maps, where
		$\varphi $ is quasi-pure and $\varphi(I)=\psi(I)$. Suppose some
		$R\in \mathscr{B}(\mathcal{H})$ is given such that $\varphi $ is
		$R-$equivalent to $\psi$, that is,
		\begin{equation}\label{R} \varphi(X)R=\psi(X)R, ~~\forall X\in
			\mathcal{A}.
		\end{equation}
		Assume that there exists $X_0\in \mathcal{A}$ such that
		$\varphi(X_0)R\neq 0.$  Then $\varphi =\psi .$
	\end{theorem}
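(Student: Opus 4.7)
The plan is to define $\beta:\mathcal{A}\to\mathscr{B}(\mathcal{H})$ by $\beta(X):=\varphi(X)R=\psi(X)R$ and invoke Theorem~\ref{decomposition}. Since $\varphi(X_0)R\neq 0$, we have $\beta\neq 0$. Applying Theorem~\ref{decomposition} produces a unique minimal CP completion $\alpha$ of $\beta$ together with simultaneous decompositions
\[
\varphi=\alpha+\varphi_1,\qquad \psi=\alpha+\psi_1,
\]
where $\varphi_1,\psi_1$ are CP maps with $\varphi_1(X)R=\psi_1(X)R=0$ for all $X\in\mathcal{A}$; in particular $\alpha\neq 0$. The goal reduces to showing $\varphi_1=0$ and $\psi_1=0$.

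The heart of the argument is to use quasi-purity of $\varphi$ to force $\varphi_1=0$. Suppose for contradiction that $\varphi_1\neq 0$. Since $\varphi_1\leq\varphi$ is a nonzero CP map and $\varphi$ is quasi-pure, Theorem~\ref{kernel} gives $\ker\varphi_1(I)=\ker\varphi(I)$. The condition $\varphi_1(I)R=0$ says that the range of $R$ lies in $\ker\varphi_1(I)=\ker\varphi(I)$, whence $\varphi(I)R=0$. Writing $\varphi(I)=V^*V$ from the minimal Stinespring triple $(\mathcal{K},\pi,V)$ of $\varphi$, we get $(VR)^*(VR)=R^*V^*VR=0$, hence $VR=0$. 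But then $\beta(X)=V^*\pi(X)VR=0$ for every $X$, contradicting $\beta\neq 0$. Therefore $\varphi=\alpha$.

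It then remains to handle $\psi$. The hypothesis $\varphi(I)=\psi(I)$ combined with $\varphi=\alpha$ and $\psi=\alpha+\psi_1$ forces $\psi_1(I)=0$, and a CP map with $\psi_1(I)=0$ must vanish identically (e.g.\ via a Stinespring factorization $\psi_1(X)=W^*\pi'(X)W$, where $\psi_1(I)=W^*W=0$ forces $W=0$). Hence $\psi=\alpha=\varphi$. The main obstacle I anticipate is the contradiction step: one has to convert the purely algebraic consequence of quasi-purity ($\ker\varphi_1(I)=\ker\varphi(I)$) into a geometric statement on the dilation $V$, by exploiting the fact that the range of $R$ is forced into $\ker\varphi_1(I)$ by the defining property of a completion. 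Once this is done, the rest is bookkeeping, and the fact that only $\varphi$ (not $\psi$) needs to be quasi-pure emerges naturally because the minimality of $\alpha$ treats $\varphi$ and $\psi$ symmetrically up to the kernel argument.
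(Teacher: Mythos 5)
Your proof is correct, and it shares the paper's overall skeleton: reduce everything to showing that $\varphi$ is itself the minimal CP completion of $\beta(X)=\varphi(X)R$, then use $\varphi(I)=\psi(I)$ to upgrade the resulting domination $\varphi\leq\psi$ to equality. The difference lies in how quasi-purity is deployed in the middle step. The paper works directly inside the minimal Stinespring dilation $(\mathcal{K},\pi,V)$: the hypothesis $\varphi(X_0)R\neq 0$ produces a non-zero vector $VRh$ in the range of $V$, cyclicity of that vector (the definition of quasi-purity) forces the reducing subspace $\overline{\mbox{span}}\{\pi(X)VRh\}$ to be all of $\mathcal{K}$, and hence the compression defining the minimal completion is trivial. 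You instead invoke Theorem~\ref{decomposition} to get the simultaneous decompositions $\varphi=\alpha+\varphi_1$, $\psi=\alpha+\psi_1$, and then kill $\varphi_1$ via the kernel characterization of Theorem~\ref{kernel}: if $\varphi_1\neq 0$ then $\ker\varphi_1(I)=\ker\varphi(I)$, and since $\varphi_1(I)R=0$ puts the range of $R$ inside $\ker\varphi_1(I)$, one gets $\varphi(I)R=0$ and hence $\beta=0$, a contradiction. Your route is more algebraic and shows that the abstract characterization in Theorem~\ref{kernel} already carries all the force of quasi-purity needed here; it also makes transparent why only $\varphi$ need be quasi-pure. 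The one place you still touch the dilation ($\varphi(I)R=0\Rightarrow VR=0\Rightarrow\beta=0$) is harmless and could even be avoided: for positive $X$ one has $0\leq R^*\varphi(X)R\leq\|X\|\,R^*\varphi(I)R=0$, which gives $\varphi(X)R=0$ directly. The paper's argument is shorter because it bypasses Theorems~\ref{kernel} and~\ref{decomposition}, but both are valid and of essentially equal depth.
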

	
	\begin{proof}
		We view $\varphi , \psi $ as CP completions of the map $\beta $
		defined on $\mathcal{A}$  by $\beta (X)= \varphi(X)R=\psi(X)R.$ Let
		$(\mathcal{K}, \pi , V)$ be a minimal Stinespring representation of
		$\varphi $.  The existence of $X_0$ as above implies that $\pi
		(X_0)VR\neq 0$, Hence the reducing subspace $\{\pi (X)VRh: X\in
		\mathcal{A}, h\in \mathcal{H}\}$ is non-trivial and the quasi-purity
		of $\varphi $ implies that it is whole of $\mathcal{K}.$ Therefore
		the minimal CP completion of the map $\beta $ is same as $\varphi .$
		In particular, $\varphi $ is dominated by $\psi .$ Since $\varphi
		(I)=\psi (I)$, it follows that $\varphi =\psi .$
		
	\end{proof}
	
	In this theorem the condition of quasi-purity plays a very crucial
	role as the following example shows.
	
	\begin{example}\label{non quasi-pure}
		Let $\mathcal{A}$  be a unital $C^*$-algebra and let $\mathcal{H}$
		be a Hilbert space.  Let $\varphi :\mathcal{A}\to
		\mathscr{B}(\mathcal{H})$ be a  completely positive map which is not
		quasi-pure. Then by  Theorem \ref{kernel} there exists a non-zero CP
		map $\alpha $ dominated by $\varphi $, with an $h_0\in \mathcal{H}$
		such that $\alpha (1)h_0=0$ and $\varphi (1)h_0\neq 0.$
		
		Suppose we can choose $Z\in \mathscr{B}(\mathcal{H})$ such that
		$Zh_0=0$ and $Z^*\alpha (X_1)Z\neq \alpha (X_1)$ for some $X_1\in
		\mathcal{A}$, then $\psi :\mathcal{A}\to \mathscr{B}(\mathcal{H})$
		defined by
		$$\psi (X) = Z^*\alpha (X)Z+ (\varphi -\alpha )(X), ~~X\in
		\mathcal{A},$$ satisfies $\varphi (X)R=\psi (X)R$ where
		$R=|h_0\rangle \langle h_0|$, but $\psi $ is different from $\varphi
		$. Typically, it is possible to choose such a $Z$, even with the
		additional restriction $Z^*\alpha (1)Z= \alpha (1)$ so that $\psi
		(1)=\varphi (1)$ and  all the conditions of the previous theorem are
		satisfied except quasi-purity of $\varphi $. However this may not be
		possible in some special cases as the following example
		demonstrates.
	\end{example}
	
	\begin{example}\label{special}
		Consider $\varphi :M_2(\mathbb{C})\to M_2(\mathbb{C})$ defined by
		$$\varphi (
		\left[ \begin{array}{cc} a&b\\
			c&d\end{array}\right]) = \left[
		\begin{array}{cc}
			a+d& b+c\\ b+c& a+d\end{array}\right].$$ Then $\varphi $ is CP. It
		is not hard to see that $\varphi $ is not quasi-pure.Take, $$R=\left[ \begin{array}{cc} 1&0\\
			0&0\end{array}\right] .$$ It is easily seen that if $\psi $ is a CP
		map satisfying $\varphi (X)R=\psi (X)R$ for all $X$ and $\psi
		(1)=\varphi(1)$, then $\psi =\varphi .$ In other words, the
		conclusion of the previous theorem may hold for some $R$ even though
		$\varphi $ is not quasi-pure.
	\end{example}

	\begin{corollary}\label{outcome}
		Let $\mathcal{A}$ be a unital $C^*$-algebra and let $\mathcal{B},
		\mathcal{C}$ be von Neumann algebras. Let $\xi : \mathcal{B}\to
		\mathcal{C}$ be a non-zero normal completely positive map. Let
		$\varphi, \psi : \mathcal{A}\to \mathcal{B}$ be two completely
		positive maps. Suppose $\varphi$  is a quasi-pure completely
		positive map, $\xi\circ \varphi $ is a non-zero map, $\varphi(I)
		=\psi(I)$ and $\varphi \underset{\xi}{=} \psi$. Then $\varphi
		=\psi$.
	\end{corollary}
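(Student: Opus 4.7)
The plan is to derive Corollary \ref{outcome} as a direct application of Theorem \ref{main} with $R$ chosen to be the support projection of $\xi$. First I would realize the von Neumann algebra $\mathcal{B}$ as a weakly closed subalgebra of $\mathscr{B}(\mathcal{H})$ for some Hilbert space $\mathcal{H}$, so that $\varphi$ and $\psi$ can be viewed as CP maps $\mathcal{A} \to \mathscr{B}(\mathcal{H})$. Since $\mathcal{B},\mathcal{C}$ are von Neumann algebras and $\xi$ is normal, formula (\ref{nullspace}) furnishes a unique support projection $P \in \mathcal{B}$ with $\mathcal{N}_\xi = \mathcal{B}(I-P)$, characterized by $\xi(Y) = \xi(YP)$ for every $Y \in \mathcal{B}$.

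Next, I would convert the a.e.\ hypothesis into an $R$-equivalence with $R = P$. Observing that $\mathcal{B}(I-P) = \{Y \in \mathcal{B} : YP = 0\}$ (one inclusion is immediate, and the other follows from $Y = Y(I-P)$ whenever $YP = 0$), the condition $\varphi(X) - \psi(X) \in \mathcal{N}_\xi$ for all $X \in \mathcal{A}$ becomes
$$\varphi(X)P = \psi(X)P, \quad \forall\, X \in \mathcal{A},$$
i.e., $\varphi \underset{P}{=} \psi$ in the sense required by Theorem \ref{main}.

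The last preparatory step is to verify the nondegeneracy condition of that theorem, namely that some $X_0 \in \mathcal{A}$ satisfies $\varphi(X_0)P \neq 0$. If instead $\varphi(X)P = 0$ for every $X$, the defining property $\xi(Y) = \xi(YP)$ of the support projection gives $\xi(\varphi(X)) = \xi(\varphi(X)P) = \xi(0) = 0$ for all $X$, contradicting the hypothesis that $\xi \circ \varphi$ is non-zero. With all the hypotheses of Theorem \ref{main} now in place---$\mathcal{A}$ unital, $\varphi$ quasi-pure, $\varphi(I) = \psi(I)$, $P$-equivalence, and the existence of a suitable $X_0$---the theorem yields $\varphi = \psi$. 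The only real piece of content beyond invoking Theorem \ref{main} is the translation from null-ideal language to $R$-equivalence language via the support projection; I expect that to be the main (though modest) obstacle, while the rest is a direct invocation.
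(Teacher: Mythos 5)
Your proposal is correct and follows essentially the same route as the paper's proof: realize $\mathcal{B}\subseteq\mathscr{B}(\mathcal{H})$, pass to the support projection $P$ of $\xi$, translate the hypothesis $\varphi \underset{\xi}{=} \psi$ into $\varphi(X)P=\psi(X)P$ for all $X$, and invoke Theorem \ref{main}. The only difference is that you explicitly verify the nondegeneracy condition (that $\xi\circ\varphi\neq 0$ forces $\varphi(X_0)P\neq 0$ for some $X_0$ via $\xi(Y)=\xi(YP)$), a step the paper leaves implicit in "the result is immediate from the previous theorem."
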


	\begin{proof} As $\mathcal{B}$ is a von Neumann algebra,
		$\mathcal{B}\subseteq \mathscr{B}(\mathcal{H})$ for some Hilbert
		space $\mathcal{H}$. In order to apply the previous results we may
		view $\varphi , \psi $ as maps from $\mathcal{A}$ to
		$\mathscr{B}(\mathcal{H}).$  Let $P$ be the support projection of
		$\xi .$ Then by (\ref{nullspace})
		$\mathcal{N}_{\xi}=\mathcal{B}(I-P).$ From
		$\varphi\underset{\xi}{=}\psi,$ we get $\varphi(X)-\psi(X)\in
		\mathcal{B}(I-P)$ for all $X\in \mathcal{A}.$ In particular,
		$(\varphi(X)-\varphi (X))P=0$, or $\varphi(X)P=\psi(X)P$ for all
		$X\in \mathcal{A}.$  Now the result is immediate from the previous
		theorem.
	\end{proof}
	
	This corollary generalizes results of \cite{PR} in the following
	ways. The algebras are no longer just matrix algebras. The identity
	map has been replaced by quasi-pure maps and the states are replaced
	by general completely positive maps.

	The following example shows that in the last result the condition of
	normality on the CP map $\xi $  is not redundant.
	
	\begin{example} Let $\mathcal{H}$ be a separable infinite dimensional Hilbert space. Let
		$\mathscr{K}(\mathcal{H}) $ be  the algebra of compact operators and
		let
		$\pi:\mathscr{B}(\mathcal{H})\to\mathscr{B}(\mathcal{H})/\mathscr{K}(\mathcal{H})
		$ be the quotient map. Fix a state $ \xi_0$  on the {\em Calkin
			algebra} $ \mathcal{B}(\mathcal{H})/\mathscr{K}(\mathcal{H}) $. Then
		$\xi:\mathscr{B}(\mathcal{H})\to\mathbb{C} $ defined as $ \xi= \xi
		_0\circ \pi $ is  a {\em non-normal} state on
		$\mathscr{B}(\mathcal{H}).$ Let $ \mbox{Q}\in
		\mathscr{B}(\mathcal{H})$ be a non-zero finite rank projection.
		Consider the pure CP map, $ \varphi
		:\mathscr{B}(\mathcal{H})\rightarrow\mathscr{B}(\mathcal{H})$
		defined by $$\varphi(\mbox{X})=\mbox{Q}\mbox{X}\mbox{Q},\forall
		\mbox{X} \in \mathscr{B}(\mathcal{H}).$$  Let
		$\psi:\mathscr{B}(\mathcal{H})\rightarrow\mathscr{B}(\mathcal{H})$
		be the CP map given by $$\psi(\mbox{X})=\frac12 \varphi(\mbox{X})
		+\frac12\xi(\mbox{X})\mbox{Q}.$$ Clearly, $
		\varphi(I)=\psi(I)=\mbox{Q} $ and $
		\varphi(\mbox{X}) - \psi(\mbox{X}) \in \mathcal{N}_{\xi}, \forall
		\mbox{X}\in\mathscr{B}(\mathcal{H})$. That is, $\varphi
		\underset{\xi}{=} \psi.$  But $  \varphi(\mbox{Q}) \neq
		\psi(\mbox{Q}), $ which implies that $ \varphi \neq \psi.$
	\end{example}

	\section*{Acknowledgments}
	Bhat gratefully acknowledges funding from  SERB (India) through JC
	Bose Fellowship No.
	
	\noindent 
	JBR/2021/000024. Chongdar thanks the  Indian
	Statistical Institute for providing research 
	fellowship.

\end{document}